\documentclass[12pt,reqno]{amsart}
\usepackage{amssymb,amsmath,enumerate}
\usepackage{mathrsfs}
\usepackage{color}

\newtheorem{theorem}{Theorem}[section]
\newtheorem{proposition}[theorem]{Proposition}

\theoremstyle{definition}
\newtheorem{definition}[theorem]{Definition}
\newtheorem{example}[theorem]{Example}

\numberwithin{equation}{section}

\newcommand{\clb}{\mathscr{B}}
\newcommand{\bd}{\mathbb{D}}
\newcommand{\cld}{\mathcal{D}}
\newcommand{\cle}{\mathcal{E}}
\newcommand{\clh}{\mathcal{H}}

\newcommand{\clm}{\mathcal{M}}
\newcommand{\clq}{\mathcal{Q}}
\newcommand{\clw}{\mathcal{W}}

\newcommand{\raro}{\rightarrow}
\newcommand{\D}{\mathbb{D}}
\newcommand{\Z}{\mathbb{Z}}

\begin{document}

\title[Hyponormal contractions and analytic shifts]{Hyponormal contractions and analytic shifts}

\author[Sneha B]{Sneha B}
\address{Statistics and Mathematics Unit, Indian Statistical Institute, 8th Mile, Mysore Road, Bangalore, 560059, India}
\email{sneharbkrishnan@gmail.com, rs\textunderscore math2105@isibang.ac.in}

\author[Bala]{Neeru Bala}
\address{IIT(ISM) Dhanbad, Jharkhand, 826004, India}
\email{neerusingh41@gmail.com, neerubala@iitism.ac.in}

\author[Sarkar]{Jaydeb Sarkar}
\address{Statistics and Mathematics Unit, Indian Statistical Institute, 8th Mile, Mysore Road, Bangalore, 560059, India}
\email{jay@isibang.ac.in, jaydeb@gmail.com}
	
\subjclass[2020]{47B20, 30H05, 30H10, 47A55, 47B35, 47B47, 15B05}

\keywords{Hyponormal operators, self-commutator, finite-rank operators, shift operators, Toeplitz operators}
	
\begin{abstract}
Hyponormal operators are known to be among the most difficult operators to analyze. In this work, we focus on two finite types of hyponormal operators. The first type becomes analytic shifts, while the second type admits analytic models. A basic model for hyponormal operators plays a key role in our analysis.
\end{abstract}


\maketitle

\tableofcontents

\section{Introduction}

Examples and representations of operators are at the heart of the theory of linear operators on Hilbert spaces. The spectral theorem for normal operators is a prime illustration of this theme. Along this line, subnormal operators are the next relevant objects. While the program of subnormal operators continues to undergo various developments, there are several theories related to them that are, to a large extent, satisfactory (see \cite{Aleman, Mccarthy, Morrel} and the excellent survey \cite{CL}). The next step lies in the analysis of hyponormal operators, which are known to be complex. An operator $T \in \clb(\clh)$ is \textit{hyponormal} if
\[
T T^* \leq T^* T.
\]
Equivalently, in terms of the self-commutator, we have
\[
[T^*,T] := T^*T - T T^* \geq 0.
\]
In this paper, $\clh$ will denote an arbitrary but fixed separable Hilbert space over $\mathbb{C}$, and we write $\clb(\clh)$ as the space of all bounded linear operators on $\clh$. Two other important classes of operators are finite-rank operators and analytic shifts on reproducing kernel Hilbert spaces. In this paper, we consider two types of finite-rank hyponormal operators:

\begin{definition}
Let $T \in \clb(\clh)$ and let $n \in \mathbb{N}$. We say that:
\begin{enumerate}
\item $T$ is a finite-isometry if $T$ is a hyponormal contraction and
\[
\text{rank} (I - T^*T) < \infty.
\]
\item $T$ is $n$-finite if $T$ is a hyponormal contraction and
\[
\text{rank} [T^*,T] = n,
\]
and
\[
\text{ran}[T^*, T] \subseteq \cld_{T^*} \ominus \cld_{T}.
\]
\end{enumerate}
\end{definition}

In this paper, we follow standard notations \cite{NagyFoias}: Given a contraction $T \in \clb(\clh)$ (that is, $\|Th\| \leq \|h\|$ for all $h \in \clh$), the \textit{defect operators} are defined as
\[
D_T=(I-T^*T)^{\frac{1}{2}} \text{ and } D_{T^*}=(I-TT^*)^{\frac{1}{2}}.
\]
The corresponding \textit{defect spaces} are defined as
\[
\cld_T = \overline{\text{span}}D_T \text{ and } \cld_{T^*}=\overline{\text{span}}D_{T^*},
\]
respectively. Among contractions, defect operators provide a way to detect hyponormality: A contraction $T$ is hyponormal if and only if
\[
D_T \leq D_{T^*}.
\]
In particular, we have $\cld_T \subseteq \cld_{T^*}$. It also follows that (see Section \ref{sec: fin rank self com})
\[
\text{ran} [T^*, T] \subseteq \cld_{T^*}.
\]
Moreover, in Proposition \ref{prop: n finite}, we prove that 
\[
\text{dim} (\cld_{T^*} \ominus \cld_T) = n.
\]
These observations provide additional motivation for the study of the class of $n$-finite operators.

For our purposes, a reproducing kernel Hilbert space is a Hilbert space $\clh_k$ of $\cle$-valued ($\cle$ being a Hilbert space) analytic functions on the open unit disc $\D \subset \mathbb{C}$, such that the evaluation operator $ev_w: \clh_k \raro \cle$ (defined by $ev_w(f) = f(w)$ for all $f \in \clh_k$) is continuous for all $w \in \D$. This property yields the construction of the kernel function $k: \D \times \D \raro \clb(\cle)$ as:
\[
k(z,w) = ev_z \circ ev_w^*,
\]
for all $z, w \in \D$. Define the multiplication operator $M_z$ on $\clh_k$ by
\[
M_zf = zf,
\]
for all $f \in \clh_k$. We say that $M_z$ is an \textit{analytic shift} if it is a contraction and left-invertible. In general, $T \in \clb(\clh)$ is said to be analytic if
\[
\bigcap_{n=0}^\infty T^n \clh = \{0\}.
\]
Evidently, $M_z$ on $\clh_k$ is analytic in this sense.

From our perspective, the idea of analytic shifts is motivated by the following well-known fact: If $T \in \clb(\clh)$ is a left-invertible and analytic operator, then $T$ is unitarily equivalent to an analytic shift $M_z$ on some $\clh_k$ \cite{Shimorin}. We express this as
\begin{equation}\label{Shimorin thm}
T \cong M_z.
\end{equation}
This paper presents results around the following three problems:

\begin{enumerate}
\item Model of finite-isometries.
\item Representations of $n$-finite operators.
\item Examples of analytic shifts.
\end{enumerate}

A hyponormal operator is said to be \textit{pure} if it does not have any normal summand; that is, there is no reducing subspace on which the restriction of the operator is normal. In the theory of hyponormal operators, it is standard, and as we will also do, to assume that the operators under consideration are pure hyponormal.

One of the main results of this paper provides a complete characterization of pure finite-isometries (see Theorem \ref{main thm}): Let $T\in\clb(\clh)$ be a pure finite-isometry. Then there exists an analytic shift $M_z$ on some $\clh_k$ such that
\[
T \cong M_z.
\]
This result, like many others in this paper, is based on a basic yet curious model of hyponormal contractions (see Proposition \ref{matrix rep}): Let $T\in\clb(\clh)$ be a pure hyponormal contraction. Then
\[
T \cong
\begin{bmatrix}
S&A\\
0&B
\end{bmatrix} \in \clb((\clh\ominus\cld_T)\oplus\cld_T),
\]
where $B\in\clb(\cld_{T})$ is a contraction, $S\in\clb(\clh\ominus\cld_T)$ is a unilateral shift, and $A\in\clb(\cld_T,\clh\ominus\cld_T)$ such that
\[
S^*A=0.
\]

Recall that a \textit{unilateral shift} $S \in \clb(\clh)$ is an isometry without any unitary summand. The latter property is equivalent to $S$ being analytic. Unilateral shift operators are concrete examples of analytic shifts. In the context of basic models of hyponormal contractions, Theorem \ref{thm 2.3} presents the following examples of analytic shifts: Let $\clh_1$ and $\clh_2$ be Hilbert spaces, $T\in\clb(\clh_1\oplus\clh_2)$, and let $S \in \clb(\clh_1)$ be a unilateral shift. Let
\[
T=\begin{bmatrix}
S&A\\
0&B
\end{bmatrix}.
\]
Assume that $A^*A+B^*B \in \clb(\clh_2)$ is invertible, $B$ is analytic, and $S^*A=0$. Then
\[
T \cong M_z,
\]
for some analytic shift $M_z$ on some $\clh_k$. 

There are plenty of natural examples of $2 \times 2$ operator matrices (such as those involving Toeplitz operators) that satisfy the above conditions. Moreover, the proof of the preceding result is instrumental in providing analytic shift models for finite isometries.

Finally, we turn to $n$-finite operators. Let $T \in \clb(\clh)$ be an $n$-finite operator. There exists an orthonormal set $\{e_j\}_{j=1}^n \subseteq \cld_{T^*} \ominus \cld_T$ and scalars $\{\alpha_j\}_{j=1}^n \subseteq (0,1]$ such that (note that $[T^*, T]$ is a self-adjoint operator)
\[
[T^*, T] = \sum_{j=1}^n \alpha_j e_j \otimes e_j.
\]
In general, for each $u, v$ in $\clh$, we define the rank-one operator $u\otimes v$ by $(u\otimes v)h = \langle h, v \rangle u$ for all $h\in\clh$.

We adopt the following notational convention: given a general contraction $T \in \clb(\clh)$ with the self-commutator representation described above, we define $m \times m$ diagonal matrix $D_{\{\alpha_i\}_{i=1}^m} \in\clb(\mathbb{C}^m)$ by
\[
D_{\{\alpha_i\}_{i=1}^m} =
\begin{bmatrix}
\sqrt{1-\alpha_1} & 0 & \cdots & 0\\
0 & \sqrt{1-\alpha_2} & \cdots & 0\\
\vdots & \vdots & & \vdots\\
0 & 0 & \cdots & \sqrt{1-\alpha_m}\\
\end{bmatrix}.
\]
Moreover, given a natural number $n \geq m$, we define $\tilde{D}_{\{\alpha_i\}_{i=1}^m}: \mathbb{C}^m \raro \mathbb{C}^n$ by
\[
\tilde{D}_{\{\alpha_i\}_{i=1}^m} = \begin{bmatrix} D_{\{\alpha_i\}_{i=1}^m} \\0_{\mathbb{C}^{n-m}}\end{bmatrix}.
\]

In Theorem \ref{main result2}, we prove the following: Let $T \in \clb(\clh)$. Then $T$ is  $n$-finite, with
\[
[T^*, T] = \sum_{j=1}^n \alpha_j e_j \otimes e_j,
\]
for some orthonormal set $\{e_j\}_{j=1}^n \subseteq \cld_{T^*} \ominus \cld_T$ and scalars $ \{\alpha_j\}_{j=1}^n \subseteq (0,1]$, if and only if
$T \cong X_1$ or $T \cong X_2$, where
\[
X_1=\begin{bmatrix}
M_z\otimes I_{\mathbb{C}^m}&0\\
0&N
\end{bmatrix} \in \clb((H^2(\bd)\otimes\mathbb{C}^n)\oplus \clh_0),
\]
and
\[
X_2=\begin{bmatrix}
M_z\otimes I_{\mathbb{C}^m} &P_{\mathbb{C}}\otimes \tilde{D}_{\{\alpha_i\}_{i=1}^m} & 0
\\
0&M_z^*\otimes {D}_{\{\alpha_i\}_{i=1}^m} & 0
\\
0&0&N
\end{bmatrix} \in \clb((H^2(\bd)\otimes\mathbb{C}^n)\oplus(H^2(\bd)\otimes\mathbb{C}^m)\oplus \clh_0),
\]
and $N$ is a normal operator acting on a closed subspace $\clh_0\subseteq\clh$. In the above, $H^2(\D)$ denotes the Hardy space (see the following section for more details), and $P_\mathbb{C}$ denotes the orthogonal projection of $H^2(\D)$ onto the subspace of constant functions.

Hyponormal operators with finite-rank self-commutators have been studied extensively. For instance, Xia \cite{Xia} proved that if the range of the self-commutator of a hyponormal operator is rank one and invariant under  its conjugate, then the operator is a linear combination of the identity operator and the unilateral shift. For results concerning models of hyponormal operators with rank one or two self-commutators and the invariance of the kernels of the self-commutators, we refer the reader to Morrel \cite{Morrel} and Lee and Lee \cite{Lee} (also see \cite{Xia, Xia1, DY}). In \cite{SDas}, Das studied the intricate structure of a class of completely non-unitary contractions, in particular hyponormal contractions, under the assumption that both defect spaces are finite-dimensional.

The paper is divided into four sections, excluding the present one. In Section \ref{sec: basic model}, we present a basic model for hyponormal contractions. Section \ref{sec: shift} discusses examples of analytic shifts related to the basic models introduced in the previous section. In Section \ref{sec: fr hypo}, we prove that finite-isometries are unitarily equivalent to analytic shifts. Section \ref{sec: fin rank self com}, presents analytic models for $n$-finite operators. The final section, Section \ref{sec: eg}, presents several examples and illustrations of the theory and concepts developed in this paper.

\section{A basic model}\label{sec: basic model}

The goal of this section is to present a basic model that represents all pure hyponormal contractions. We use the classical  \textit{Nagy-Foias and Langer orthogonal decompositions} of contractions \cite{HL, NF60}: Given a contraction $T \in \clb(\clh)$, define closed subspaces $\clh_u$ and $\clh_{c}$ of $\clh$ by
\[
\clh_u = \{h \in \clh: \|T^m h\| = \|T^{*m} h\| = \|h\|, m \in \Z_+\},
\]
and
\[
\clh_{c} = \clh_u^\perp,
\]
respectively. Then both $\clh_u$ and $\clh_{c}$ reduce $T$, and
\[
\clh = \clh_u \oplus \clh_{c}.
\]
Moreover, $T|_{\clh_u}$ is unitary whereas $T|_{\clh_{c}}$ is completely non-unitary (that is, $T|_{\clh_{c}}$ does not have unitary summand). Therefore, we have the decomposition of $T$ on $\clh = \clh_u \oplus \clh_{c}$ as
\[
T = \begin{bmatrix}
T|_{\clh_u} & 0
\\
0 & T|_{\clh_{c}}
\end{bmatrix}.
\]

Given a hyponormal contraction $T \in \clb(\clh)$, in view of $D_T \leq D_{T^*}$, we know that $\cld_T \subseteq \cld_{T^*}$. Moreover, in what follows, we will use the orthogonal decomposition
\[
\clh=(\clh\ominus\cld_T)\oplus\cld_T.
\]
With this preparation, we now turn to the model for hyponormal contractions, which will be used extensively throughout the paper.

\begin{proposition}\label{matrix rep}
Let $T\in\clb(\clh)$ be a pure hyponormal contraction. Then
\begin{align}\label{eqn 11}
T \cong
\begin{bmatrix}
S&A\\
0&B
\end{bmatrix} \in \clb((\clh\ominus\cld_T)\oplus\cld_T),
\end{align}
where $B\in\clb(\cld_{T})$ is a contraction,  $S\in\clb(\clh\ominus\cld_T)$ is a unilateral shift, and $A\in\clb(\cld_T,\clh\ominus\cld_T)$ such that
\[
S^*A=0.
\]
\end{proposition}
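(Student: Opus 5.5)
Write $\clh_0 := \clh\ominus\cld_T = \ker D_T$. This is exactly the set of vectors $h$ with $\|Th\| = \|h\|$, because $\|h\|^2-\|Th\|^2 = \|D_T h\|^2$. The proof has two parts: first show that the block matrix of $T$ with respect to $\clh_0\oplus\cld_T$ is upper triangular with $S^*A=0$, then show that $S$ is a unilateral shift.

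\textbf{Step 1: $\clh_0$ is invariant under $T$.} Take $h\in\clh_0$, so $T^*Th=h$. We claim $Th\in\clh_0$, i.e. $\|T(Th)\|=\|Th\|$. Hyponormality gives $\|T^*x\|\le\|Tx\|$ for all $x$. Apply this to $x=Th$:
\[
\|h\| = \|T^*Th\| \le \|T(Th)\| \le \|Th\| = \|h\|.
\]
Hence $\|T^2h\|=\|Th\|$, so $Th\in\ker D_T=\clh_0$.

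\textbf{Step 2: the block form.} Since $\clh_0$ is $T$-invariant, $T=\begin{bmatrix} S & A\\ 0 & B\end{bmatrix}$ with
\[
S=T|_{\clh_0},\qquad A=P_{\clh_0}T|_{\cld_T},\qquad B=P_{\cld_T}T|_{\cld_T}.
\]
$S$ is an isometry because $\|Th\|=\|h\|$ on $\clh_0$. $B$ is a compression of a contraction, hence a contraction.

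\textbf{Step 3: $S^*A=0$.} Note $S^*=P_{\clh_0}T^*|_{\clh_0}$. For $h\in\clh_0$ and $d\in\cld_T$,
\[
\langle S^*Ad,h\rangle=\langle Ad,Sh\rangle=\langle Td,Th\rangle=\langle d,T^*Th\rangle=\langle d,h\rangle=0.
\]
The second equality holds because $Th\in\clh_0$, and $T^*Th=h$ was used in the fourth. Thus $S^*A=0$.

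\textbf{Step 4: $S$ is a unilateral shift.} This is the main obstacle. Take the Wold decomposition $\clh_0=\clm\oplus\clq$, where $S|_{\clm}$ is unitary. We must show that $\clm$ reduces $T$ and that $T|_{\clm}$ is normal; purity then forces $\clm=\{0\}$.
\begin{enumerate}
\item $\clm$ is $T$-invariant, since $T|_{\clm}=S|_{\clm}$ is unitary on $\clm$.
\item For $m\in\clm$ we have $m=Sm'=Tm'$ with $m'\in\clm$. Therefore $T^*m=T^*Tm'=m'\in\clm$, using $m'\in\clh_0$. So $T^*\clm\subseteq\clm$.
\end{enumerate}
Hence $\clm$ reduces $T$, and $T|_{\clm}$ is unitary, in particular normal. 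Purity of $T$ gives $\clm=\{0\}$, so $S$ is a unilateral shift.

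Steps 2–4 together give the model \eqref{eqn 11}, with the identification holding as an actual equality (no unitary change of space is needed).
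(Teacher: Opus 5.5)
Your proof is correct, and it takes a more elementary route than the paper's.

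\textbf{The paper's route.} It obtains invariance of $\clh\ominus\cld_T$ from $\cld_T\subseteq\cld_{T^*}$ together with the standard fact that $T$ maps $\ker D_T$ unitarily onto $\ker D_{T^*}$. It removes the unitary part of the Wold decomposition by compressing $[T^*,T]\ge 0$ to $\clw_u$, which forces the corner $A_1$ to vanish. It then gets $S^*A=0$ from the $(1,1)$ entry $P_{\ker S^*}-AA^*\ge 0$ of the self-commutator plus Douglas' range inclusion theorem.

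\textbf{Your route.} You use hyponormality only once, as $\|T^*x\|\le\|Tx\|$ at $x=Th$, to get $T(\ker D_T)\subseteq\ker D_T$. After that, both $S^*A=0$ and $T^*\clm\subseteq\clm$ follow from the single identity $T^*T=I$ on $\ker D_T$, with no Douglas lemma. In fact your Step 3 already yields the paper's $A_1=0$, since $\clm=S\clm\subseteq\operatorname{ran}S$. Your argument therefore isolates the real role of hyponormality: the same proof gives this model for every completely non-unitary contraction $T$ with $T(\ker D_T)\subseteq\ker D_T$, and purity is used only to discard the unitary part.

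\textbf{What the paper's route gives in exchange.} It records the sharper identification $\operatorname{ran}S=\clh\ominus\cld_{T^*}$, i.e.\ $\ker S^*=\cld_{T^*}\ominus\cld_T$, and it works directly with the block form of $[T^*,T]$. Both are reused in the proof of the model for $n$-finite operators. Your Step 1 does not identify the range of $S$, but that is not needed for this proposition.
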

\begin{proof}
Since $T$ is a hyponormal operator, we know that $\cld_T\subseteq\cld_{T^*}$. Consequently
\[
\clh\ominus\cld_{T^*}\subseteq\clh\ominus\cld_T.
\]
An elementary fact about contractions implies that $T|_{\clh\ominus\cld_T}:\clh\ominus\cld_T\rightarrow\clh\ominus\cld_{T^*}$ is a unitary operator with inverse $T^*|_{\clh\ominus\cld_{T^*}}:\clh\ominus\cld_{T^*}\rightarrow\clh\ominus\cld_T$ \cite[Page 8]{NagyFoias} (in this context, it is useful to note that $T \cld_T \subseteq \cld_{T^*}$). In view of $\clh\ominus\cld_{T^*}\subseteq\clh\ominus\cld_T$, we consider $T|_{\clh\ominus\cld_T}$ as an operator on $\clh\ominus\cld_T$, that is, $T|_{\clh\ominus\cld_T} \in \clb(\clh\ominus\cld_T)$. Define $S\in\clb(\clh\ominus\cld_T)$ by
\[
Sx=Tx,
\]
for all $x\in\clh\ominus\cld_T$. Therefore,
\[
S = T|_{\clh\ominus\cld_T},
\]
defines an isometry on $\clh\ominus\cld_T$. With respect to the decomposition $\clh=(\clh\ominus\cld_T)\oplus\cld_T$, we write
\[
T=\begin{bmatrix}
S&A\\
0&B
\end{bmatrix}.
\]
Since $S$ on $\clh\ominus\cld_T$ is an isometry, the von Neumann and Wold decomposition theorem \cite[page 3]{NagyFoias} guarantees the existence of closed subspaces $\clw_u$ and $\clw_s$ of $\clh\ominus\cld_T$ such that
\begin{enumerate}
\item $\clw_u\oplus \clw_s = \clh\ominus\cld_T$.
\item Both $\clw_u$ and $\clw_s$ reduce $S$.
\item $U:=S|_{\clw_u}$ is a unitary operator on $\clw_u$.
\item  $S_0:=S|_{\clw_s}$ is a unilateral shift on $\clw_s$.
\end{enumerate}
Therefore, we have the representation of $T$ as
\[
T=\begin{bmatrix}
U&0&A_1\\
0&S_0&A_2\\
0&0&B
\end{bmatrix},
\]
on $\clh= \clw_u \oplus \clw_s \oplus\cld_T$. We claim that $A_1 = 0$ (note that $A_1: \cld_T \raro \clw_u$). Indeed, for $x\in \clw_u$, we have
\begin{align*}
\langle (U^*U-UU^*-A_1A_1^*)x,x\rangle=\langle(T^*T-TT^*)x,x\rangle\geq 0.
\end{align*}
Since $U$ is a unitary operator, we have $U^*U - U U^* = 0$, which yields
\[
-\langle A_1A_1^*x,x\rangle\geq 0,
\]
for every $x\in W_u$. This implies $A_1=0$, and consequently
\[
T=\begin{bmatrix}
U&0&0
\\
0&S_0&A_2
\\
0&0&B
\end{bmatrix}.
\]
Since $T$ is a pure hyponormal operator, it follows that $W_u = \{0\}$ (note that $\clw_u$ reduces $T$ and $T|_{\clw_u}$ is unitary); therefore, $S=S_0$ is a unilateral shift. As a result, we have
\[
T=\begin{bmatrix}
S & A_2\\
0 & B
\end{bmatrix},
\]
which also implies
\[
T^*T-TT^*=\begin{bmatrix}
P_{\ker S^*}-A_2A_2^*&*
\\
*&*
\end{bmatrix}.
\]
Since $T$ is a hyponormal operator, we have
\[
P_{\ker S^*} - A_2A_2^*\geq 0.
\]
By Douglas' range inclusion theorem,
\[
\text{ran} A_2 \subseteq \ker S^*,
\]
which is equivalent to saying that $S^* A_2=0$. This completes the proof of the result.
\end{proof}

Recall that the \textit{multiplicity} of a unilateral shift $S$ is the number
\[
d = \text{dim} \ker S^* \in \mathbb{N} \cup \{\infty\}.
\]
The \textit{Hardy space} over $H^2(\D)$ is the Hilbert space of all analytic functions on $\D$ whose power series coefficients are square-summable. Note that $M_z$ on $H^2(\D)$ is a unilateral shift of multiplicity $1$ (as $\dim(\ker M_z^*) = 1$). A unilateral shift of multiplicity $d \in \mathbb{N} \cup \{\infty\}$ is unitarily equivalent to $M_z$ on a $\cld$-valued Hardy space $H^2_\cld(\D)$, where $\cld$ is a Hilbert space and
\[
\text{dim} \cld = d.
\]
In particular, in the previous result, $S$ on $\clh \ominus \cld_T$ can be replaced by $M_z \otimes I_\cle$ on $H^2(\D) \otimes \cle$ for some Hilbert space $\cle$ such that
\[
\dim \cle = \dim(\clh \ominus \cld_T).
\]

The following elementary fact is in the spirit of the operator $T$ considered in the above proposition (here we are assuming $B = S^*$). This will be used again in what follows.

\begin{proposition}\label{proposition 1}
Let $S\in\clb(\clh)$ be an isometry and let $X\in\clb(\clh)$. Define $T\in\clb(\clh\oplus\clh)$ by
\[
T = \begin{bmatrix}
S&X\\
0&S^*
\end{bmatrix}.
\]
Then $T$ is a hyponormal contraction if and only if $X$ is a partial isometry with
\[
\text{ran} X \subseteq \text{ran} X^* = \ker S^*.
\]
Moreover, in this case, $T$ is an isometry.
\end{proposition}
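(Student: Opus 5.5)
Compute the two Gram-type products block by block. With $T = \begin{bmatrix} S & X \\ 0 & S^*\end{bmatrix}$ we get
\[
T^*T = \begin{bmatrix} I & S^*X \\ X^*S & X^*X + SS^* \end{bmatrix}, \qquad
TT^* = \begin{bmatrix} SS^* + XX^* & XS \\ S^*X^* & I \end{bmatrix},
\]
using $S^*S = I$. The plan is to read off necessary conditions from the contractivity and hyponormality inequalities, and then verify sufficiency directly.

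\emph{Necessity.} Assume $T$ is a hyponormal contraction.

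First, from $T^*T \le I$ the $(1,1)$ entry of $I - T^*T$ is $0$. Positivity of the block matrix then forces the off-diagonal entry to vanish, so $S^*X = 0$, i.e.\ $\operatorname{ran} X \subseteq \ker S^*$. The $(2,2)$ entry gives $X^*X + SS^* \le I$, i.e.\ $X^*X \le I - SS^* = P_{\ker S^*}$.

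Second, hyponormality gives
\[
[T^*,T] = \begin{bmatrix} P_{\ker S^*} - XX^* & * \\ * & X^*X + SS^* - I \end{bmatrix} \ge 0 .
\]
Its $(2,2)$ entry is $X^*X - P_{\ker S^*} \ge 0$. Combined with the contractivity bound this yields $X^*X = P_{\ker S^*}$. Hence $X$ is a partial isometry with initial space $\operatorname{ran} X^* = \ker S^*$. Together with $\operatorname{ran} X \subseteq \ker S^*$ from the first step, this is exactly the claimed condition.

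With these, the $(2,2)$ entry of $I - T^*T$ is $0$ and the off-diagonal entries $S^*X$ and $X^*S$ vanish, so $T^*T = I$ and $T$ is an isometry. This gives the ``moreover'' clause.

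\emph{Sufficiency.} Assume $X$ is a partial isometry with $\operatorname{ran} X \subseteq \operatorname{ran} X^* = \ker S^*$.

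Then $X^*X = P_{\ker S^*}$ and $S^*X = 0$, so $T^*T = I$. Thus $T$ is an isometry, and in particular a contraction. Hyponormality follows because $TT^* \le \|T\|^2 I = I = T^*T$, equivalently $TT^*$ is a projection since $T$ is an isometry.

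\emph{Main obstacle.} The only delicate point is the necessity step that converts the two opposite inequalities on the $(2,2)$ entries into the equality $X^*X = P_{\ker S^*}$. It must be paired correctly with the standard fact that a positive block matrix with a zero diagonal block has zero off-diagonal blocks; that fact is what produces $S^*X = 0$. Everything else is routine block computation.
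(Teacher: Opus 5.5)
Your proof is correct and follows essentially the paper's route: the same block formulas for $T^*T$ and $TT^*$, with $S^*X=0$ and $X^*X\le P_{\ker S^*}$ read off from $I-T^*T\ge 0$, and the reverse inequality $X^*X\ge P_{\ker S^*}$ from the $(2,2)$ entry of $[T^*,T]\ge 0$. The only differences are minor streamlinings: the paper also extracts $XS=0$ from $I-TT^*\ge 0$ to check $XX^*X=X$, and in the converse it computes $[T^*,T]=\begin{bmatrix} P_{\ker S^*}-XX^* & 0\\ 0 & 0\end{bmatrix}$ explicitly. You instead use directly that $X^*X$ being a projection already makes $X$ a partial isometry, and that every isometry is automatically hyponormal.
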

\begin{proof}
First, we compute
\[
T^*T=\begin{bmatrix}
I&S^*X
\\\
X^*S& X^*X+SS^*
\end{bmatrix},
\]
and
\[
TT^*=\begin{bmatrix}
SS^*+XX^*&XS\\
S^*X^*& I
\end{bmatrix}.
\]
Assume that $T$ is a hyponormal contraction. Then  $S^*X=0$ and $XS=0$. Equivalently, $\text{ran} X,\,\text{ran} X^* \subseteq \ker S^*$. Moreover, we have
\begin{align*}
I-T^*T=\begin{bmatrix}
0&0\\
0& P_{\ker S^*}-X^*X
\end{bmatrix}, I-TT^*=\begin{bmatrix}
P_{\ker S^*}-XX^*&0\\
0&0
\end{bmatrix}.
\end{align*}
Note that $I- T^* T \geq 0$ gives $X^*X \leq P_{\ker S^*}$. On the other hand, $T^* T \geq T T^*$ implies $X^*X+SS^* \geq I$, or equivalently, $X^*X \geq P_{\ker S^*}$. Therefore, we conclude that
\[
X^*X = P_{\ker S^*}.
\]
By using $XS=0$, we find
\[
XX^*X=XP_{\ker S^*} = X(I-P_{\text{ran} S})=X,
\]
and hence, $X$ is a partial isometry with $\text{ran} X^* = \text{ran} (X^*X) = \ker S^*$.

\noindent Conversely, assume that $X$ is a partial isometry with $\text{ran} X\subseteq \text{ran} X^* =   \ker S^*$. Then $S^*X=0$, $XS=0$, and $X^*X=P_{\text{ran} X^*}=P_{\ker S^*}$. Therefore, we have
\[
T^*T-TT^*=\begin{bmatrix}
P_{\text{ker}(S^*)}-XX^*&0\\
0& 0
\end{bmatrix}.
\]
Clearly, $T$ is hyponormal and  $I-T^*T=0$. This completes the proof of the proposition.
\end{proof}

As pointed out earlier, Das \cite{SDas} analyzed the structure of a class of hyponormal contractions whose defect spaces are both finite-dimensional. One of the basic tools used in \cite{SDas} is the classical Nagy-Foias and Langer decompositions of contractions, which is also used in this section in constructing the basic model for pure hyponormal contractions.

\section{Analytic shifts}\label{sec: shift}

In the previous section, we constructed a model for pure hyponormal operators. In this section, we use that model as motivation to present examples of analytic shifts.

The following result serves two purposes: first, it provides examples of analytic shifts from a more general operator-theoretic perspective; second, it lays the groundwork for showing that finite-isometries are analytic shifts, a result we will explore in the next section. Although the invertibility assumptions imposed on the operator in what follows may appear somewhat artificial, we will give natural existence of such operators. We recall that if $T \in \clb(\clh)$ is left-invertible and analytic, then there exists an analytic shift $M_z$ on some reproducing kernel Hilbert space $\clh_k$ such that (see \eqref{Shimorin thm})
\[
T \cong M_z.
\]

\begin{theorem}\label{thm 2.3}
Let $\clh_1$ and $\clh_2$ be Hilbert spaces, $T\in\clb(\clh_1\oplus\clh_2)$, and let $S \in \clb(\clh_1)$ be a unilateral shift. Let
\[
T=\begin{bmatrix}
S&A\\
0&B
\end{bmatrix}.
\]
Assume that $A^*A+B^*B \in \clb(\clh_2)$ is invertible, $B$ is analytic, and
\[
S^*A=0.
\]
Then $T$ is unitarily equivalent to an analytic shift.
\end{theorem}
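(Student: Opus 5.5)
By \eqref{Shimorin thm}, it suffices to show two things: $T$ is left-invertible, and $T$ is analytic, that is, $\bigcap_{n\ge 0} T^n(\clh_1\oplus\clh_2)=\{0\}$. (I read "analytic shift" as the notion in the introduction; if contractivity is also required, it is not among the hypotheses, so I take the statement to mean $T\cong M_z$ on some $\clh_k$ with $M_z$ left-invertible and analytic.)

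\emph{Left-invertibility.} Compute
\[
T^*T=\begin{bmatrix} S^*S & S^*A\\ A^*S & A^*A+B^*B\end{bmatrix}=\begin{bmatrix} I & 0\\ 0 & A^*A+B^*B\end{bmatrix},
\]
using $S^*S=I$ and $S^*A=0$. Since $A^*A+B^*B$ is invertible by hypothesis, $T^*T$ is invertible, so $T$ is bounded below, i.e.\ left-invertible.

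\emph{Analyticity.} Let $x=x_1\oplus x_2\in\bigcap_n T^n\clh$, where $\clh=\clh_1\oplus\clh_2$. Since $T$ is upper triangular,
\[
T^n=\begin{bmatrix} S^n & A_n\\ 0 & B^n\end{bmatrix},\qquad A_n=\sum_{j=0}^{n-1}S^{n-1-j}AB^j .
\]
Hence $x_2\in B^n\clh_2$ for every $n$, and analyticity of $B$ gives $x_2=0$.

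It remains to show $x_1=0$. This is the main step, because $A_n y$ need not lie in $S^n\clh_1$. Write $\clh_1=\bigoplus_{k\ge0}S^k\mathcal L$ with $\mathcal L=\ker S^*$. By hypothesis $\operatorname{ran}A\subseteq\mathcal L$, so $S^{n-1-j}AB^jy\in S^{n-1-j}\mathcal L$. Thus for $x=T^n(u\oplus y)$ with $B^ny=0$, the component of $x_1$ in $S^k\mathcal L$ for $k<n$ is exactly $AB^{n-1-k}y$.

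My planned approach is to run a stripping induction instead of working with $A_n$ directly. Since $x\in T\clh$, write $x=T(u^{(1)}\oplus y^{(1)})$. Left-invertibility and the intersection property should let me choose $u^{(1)}\oplus y^{(1)}\in\bigcap_n T^n\clh$. The reason is that $T$ is injective, so $T^n w=x=Tw'$ forces $w'=T^{n-1}w$, hence $w'\in T^{n-1}\clh$ for all $n$. The second-component argument then gives $y^{(1)}=0$, so $x=Su^{(1)}$ and $x_1\in S\clh_1$.

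Iterating, all the preimages $T^{-k}x$ lie in the intersection, and each has zero second component. Therefore $x_1=S^ku^{(k)}\in S^k\clh_1$ for every $k$. Since $S$ is a unilateral shift, $\bigcap_k S^k\clh_1=\{0\}$, so $x_1=0$.

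The key observation, which I expect to be the real obstacle, is that the intersection $\clm=\bigcap_n T^n\clh$ satisfies $T\clm=\clm$ when $T$ is injective with closed range. So the approach is to show $\clm\subseteq\clh_1$ first, and then $\clm\subseteq\bigcap_k S^k\clh_1=\{0\}$.

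With both properties in hand, \eqref{Shimorin thm} gives $T\cong M_z$ for an analytic shift.
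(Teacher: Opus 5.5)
Your argument is correct. Its overall shape matches the paper's, but the analyticity step runs on a different key lemma, and that lemma makes the proof shorter and more general. Both proofs get left-invertibility from $T^*T=I\oplus(A^*A+B^*B)$. Both kill the second components of vectors in $\clm=\bigcap_n T^n\clh$ using analyticity of $B$, and then push the first components into $\bigcap_k S^k\clh_1$.

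The paper fixes arbitrary preimages $T^n(h_n\oplus g_n)=h\oplus g$ and proves $g_m=B^ng_{n+m}$ by a componentwise induction on $m$:
\begin{itemize}
\item projecting onto $\ker S^*$, using $\text{ran}A\perp\text{ran}S$, gives $A(g_m-B^ng_{m+n})=0$;
\item the induction hypothesis gives $B(g_m-B^ng_{m+n})=0$;
\item $\ker A\cap\ker B=\{0\}$ then forces $g_m=B^ng_{m+n}$.
\end{itemize}
This is followed by a case split on whether some $g_{m_0}\neq 0$.

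Your observation that injectivity of $T$ alone gives $T\clm=\clm$ subsumes all of this. Since $S$ is isometric and $\text{ran}A\perp\text{ran}S$, one has $\ker T=\{0\}\oplus(\ker A\cap\ker B)$. So the paper's induction is really a hand-made uniqueness-of-preimages argument. Once $\clm\subseteq\clh_1\oplus\{0\}$, you get $S\clm=\clm$, hence $\clm\subseteq\bigcap_kS^k\clh_1=\{0\}$ at once.

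Your route also shows more. Any injective upper-triangular $\begin{bmatrix}S&A\\0&B\end{bmatrix}$ with analytic diagonal entries is analytic, and $S^*A=0$ together with invertibility of $A^*A+B^*B$ is needed only to secure left-invertibility. The paper's componentwise identities are what it recycles in Theorem \ref{main thm}. Your remark that $h_n\oplus g_n=T(h_{n+1}\oplus g_{n+1})$ follows directly from injectivity would shorten that proof too.

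Minor points:
\begin{itemize}
\item Only injectivity, not closed range, is needed for $T\clm=\clm$.
\item In your exploratory paragraph, the $S^k\mathcal{L}$-component of $x_1$ is $S^kAB^{n-1-k}y$, not $AB^{n-1-k}y$. This is unused anyway.
\item Your caveat about contractivity is justified. $T$ is a contraction only when $A^*A+B^*B\le I$, and the Toeplitz example in Section \ref{sec: eg} has $A^*A+B^*B=2I$. So the conclusion must be read, as you do, as $T\cong M_z$ with $M_z$ left-invertible and analytic.
\end{itemize}
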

\begin{proof}
Set $\clh = \clh_1 \oplus \clh_2$. As $S^*A=0$, it follows that
\[
T^*T=\begin{bmatrix}
I&0\\
0&A^*A+B^*B
\end{bmatrix}.
\]
Since $A^*A+B^*B$ is invertible, it follows that $T^*T$ is invertible, and consequently, $T$ is left-invertible. We turn to the nontrivial part of the proof: showing that $T$ is analytic, that is,
\[
\underset{n\geq 0}{\bigcap}T^n\clh=\{0\}.
\]
On the contrary, assume that there exists a nonzero vector $\begin{bmatrix}h\\ g \end{bmatrix} \in \clh$ such that
\[
 \begin{bmatrix}
			h\\
			g
\end{bmatrix}\in\underset{n\geq 0}{\bigcap}T^n\clh.
\]
For each $n \in \Z_+$, there exist $C_n \in \clb(\clh_2, \clh_1)$ and $\begin{bmatrix}h_n\\g_n\end{bmatrix}\in\clh$ such that
\[
T^n= \begin{bmatrix}
S^n&C_n\\
0&B^n
\end{bmatrix},
\]
and
\[
T^n\begin{bmatrix}
h_n\\
g_n
\end{bmatrix}=\begin{bmatrix}
h\\
g
\end{bmatrix}.
\]
Assume for a moment that $g_n=0$ for all $n\in\Z_+$. Then, on one hand,
\[
T^n \begin{bmatrix}h_n\\ g_n \end{bmatrix} = \begin{bmatrix} S^n h_n\\ 0 \end{bmatrix},
\]
and on the other hand,
\[
T^n \begin{bmatrix}
h_n\\g_n
\end{bmatrix} = \begin{bmatrix}
S^n&C_n\\0&B^n
\end{bmatrix}\begin{bmatrix}
h_n\\0
\end{bmatrix}=\begin{bmatrix}
h\\g
\end{bmatrix}.
\]
Comparing the above identitites, we have $g=0$ and $h = S^n h_n$ for all $n \in \Z_+$. Since $S$ is analytic, we conclude that $h=0$, and hence we arrive at the contradiction that
\[
\begin{bmatrix}
h\\g
\end{bmatrix} = \begin{bmatrix}
0\\0
\end{bmatrix}.
\]
In view of the above observation, we now assume that there exists some $m_0\in\Z_+$ such that $g_{m_0}\ne 0$ and
\[
g_m=0,
\]
for all $m<m_0$. We also have
\begin{align*}\begin{bmatrix}
h\\g
\end{bmatrix} =
\begin{bmatrix}
S&A\\0&B
\end{bmatrix}\begin{bmatrix}
h_1\\g_1
\end{bmatrix}=\begin{bmatrix}
S^2&SA+AB\\
0&B^2
\end{bmatrix}\begin{bmatrix}
h_2\\g_2
\end{bmatrix}=\cdots.\end{align*}
Comparing the components, we find
\begin{equation}\label{eqn3}
h=Sh_1 + Ag_1 = S^2h_2 + (SAg_2+ABg_2) = \cdots,
\end{equation}
and
\begin{equation}\label{eqn4}
g =B^n g_ n,
\end{equation}
for all $n \in\Z_+$. We claim that
\begin{equation}\label{induction equation}
g_m=B^ng_{n+m},
\end{equation}
for all $m,n\ge 1$. We prove the claim by induction on $m\in\mathbb{N}$. Since $S^*A=0$, we have
\[
\text{ran} A \perp \text{ran} S.
\]
It follows that
\[
Ag_1=ABg_2=AB^2g_3=\cdots.
\]
By considering $Ag_1=ABg_2$ from the above, we find
\[
A(g_1 - B g_2) = 0,
\]
and, on the other hand, \eqref{eqn4} for $n=1,2$, implies that $B(g_1 - B g_2) = 0$. Summarizing, we have
\[
B(g_1-Bg_2)=A(g_1-Bg_2)=0.
\]
Similarly, we have
\[
B(g_1-B^2g_3)=A(g_1-B^2g_3)=0,
\]
and so on. In other words, we have
\[
g_1 - B^n g_{n+1}\in \ker B \cap \ker A,
\]
for every $n\in\mathbb{N}$. However, since $A^*A+B^*B$ is invertible, it follows that
\[
\ker B \cap \ker A = \{0\},
\]
and as a result
\[
g_1-B^ng_{n+1}=0,
\]
for every $n\in\mathbb{N}$. Therefore, we have
\[
g_1=Bg_2=B^2g_3=\cdots,
\]
which proves \eqref{induction equation} for $m=1$. Next, we assume that \eqref{induction equation} holds for $m-1$; that is,
\[
g_{m-1}=B^ng_{n+m-1},
\]
for all $n\ge 1$. By \eqref{eqn3}, we have
\[
h = S^{m-1} h_{m-1} + S^{m-2} A g_{m-1} + AB g_{m-1} = S^{m} h_{m} + S^{m-1} A g_{m} + AB g_{m} = \cdots.
\]
Applying $S^{*{m-1}}$ to either side of the above, and using the fact that $\text{ran} A \perp \text{ran} S$, we obtain
\begin{align*}
S^{*{m-1}}h  = &h_{m-1}
\\
= & Sh_m+Ag_m
\\
=& S^2h_{m+1}+SAg_{m+1}+ABg_{m+1}
\\
=& S^3h_{m+2}+S^2Ag_{m+2}+SABg_{m+2}+AB^2g_{m+2}
\\
&\vdots
\end{align*}
Again, using the orthogonality property $\text{ran} A \perp \text{ran} S$, we have
\[
Ag_m=ABg_{m+1}=AB^2g_{m+2}=\cdots,
\]
which yields
\[
A(g_m-Bg_{m+1})=A(g_m-B^2g_{m+2})=A(g_m-B^3g_{m+3})=\cdots=0.
\]
Now, as we know by the induction hypothesis,
\[
g_{m-1}=Bg_m=B^2g_{m+1}=B^3g_{m+2}=\cdots,
\]
it follows that
\[
B(g_m-Bg_{m+1})=B(g_m-B^2g_{m+2})=B(g_m-B^3g_{m+3})=\cdots=0,
\]
and hence, for every $ n\in \mathbb{N}$, we have
\[
g_m-B^ng_{m+n}\in \ker B \cap \ker A = \{0\},
\]
which implies that \eqref{induction equation} holds for all $m$. As a result, we have
\begin{equation}\label{eqn: g_m}
g_m\in\underset{n\geq 0}{\bigcap}B^n\clh_2,
\end{equation}
for all $m\in\mathbb{N}$. In particular, $g_{m_0}\in\underset{n\geq 0}{\cap}B^n\clh_2$, a contradiction to the fact that
\[
\underset{n\geq 0}{\bigcap}B^n\clh_2=\{0\}.
\]
This proves that $T$ is analytic. Since we have already shown that $T$ is left-invertible, we can finally conclude that $T$ is unitarily equivalent to an analytic shift.
\end{proof}

In the final section, we present examples that satisfy the conditions of the preceding theorem.

\section{Finite-isometries}\label{sec: fr hypo}

In this section, we completely characterize pure finite-isometries. Part of the proof follows the approach used in Theorem \ref{thm 2.3}.

\begin{theorem}\label{main thm}
Let $T\in\clb(\clh)$ be a pure finite-isometry. Then $T$ is unitarily equivalent to an analytic shift.
\end{theorem}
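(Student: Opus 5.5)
The plan is to verify the two hypotheses of \eqref{Shimorin thm}: $T$ is left-invertible and $T$ is analytic. Throughout we use the model of Proposition \ref{matrix rep}, writing $T=\begin{bmatrix} S&A\\ 0&B\end{bmatrix}$ on $(\clh\ominus\cld_T)\oplus\cld_T$. Here $\dim\cld_T=\text{rank}(I-T^*T)<\infty$, and since $S^*A=0$ we have
\[
T^*T=\begin{bmatrix} I&0\\ 0&A^*A+B^*B\end{bmatrix}.
\]

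\emph{Left-invertibility.} First I would show that $\ker T=\{0\}$.
\begin{itemize}
\item For a hyponormal operator, $\|T^*x\|\le\|Tx\|$, so $\ker T\subseteq\ker T^*$.
\item Hence $\ker T$ is invariant under both $T$ and $T^*$, so it reduces $T$.
\item The restriction of $T$ to $\ker T$ is the zero operator, which is normal, so purity forces $\ker T=\{0\}$.
\end{itemize}
It follows that $\ker A\cap\ker B=\{0\}$, so $A^*A+B^*B$ is injective on the finite-dimensional space $\cld_T$ and therefore invertible. Then $T^*T$ is invertible and $T$ is left-invertible. Note that this is exactly the invertibility hypothesis of Theorem \ref{thm 2.3}. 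However, $B$ need not be analytic (it is just some contraction on a finite-dimensional space), so Theorem \ref{thm 2.3} cannot be quoted directly. Analyticity needs a separate argument.

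\emph{Analyticity.} Set $\clh_\infty=\bigcap_{n\ge0}T^n\clh$.
\begin{itemize}
\item Each $T^n$ is left-invertible, so $T^n\clh$ is closed and $\clh_\infty$ is a closed $T$-invariant subspace.
\item $T$ is injective with closed range, so $T\clh_\infty=\clh_\infty$. Thus $R:=T|_{\clh_\infty}$ is an invertible operator on $\clh_\infty$.
\item $R$ is a hyponormal contraction, since hyponormality passes to restrictions to invariant subspaces.
\item $I-R^*R=P_{\clh_\infty}(I-T^*T)|_{\clh_\infty}$ has finite rank, so $R$ is an isometry modulo compacts and $\sigma_e(R)\subseteq\partial\D$.
\end{itemize}
The key step, which I expect to be the main obstacle, is to show $\sigma(R)\subseteq\partial\D$. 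The set $\D\setminus\sigma_e(R)=\D$ is connected and contains $0\notin\sigma(R)$, so $R-\lambda$ is Fredholm of index $0$ for all $\lambda\in\D$. Hence any $\lambda\in\sigma(R)\cap\D$ is an eigenvalue of $R$, and therefore of $T$. For hyponormal $T$, $\ker(T-\lambda)\subseteq\ker(T-\lambda)^*$, so $\ker(T-\lambda)$ reduces $T$ and $T$ acts there as the normal operator $\lambda I$. This contradicts purity, so $\sigma(R)\subseteq\partial\D$.

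Next, $R^{-1}$ is again hyponormal, being the inverse of an invertible hyponormal operator, and hyponormal operators are normaloid. Therefore
\[
\|R^{-1}\|=r(R^{-1})=1.
\]
Combined with $\|R\|\le1$, this shows $R$ is unitary.

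Finally I would show that $\clh_\infty\subseteq\clh_u$, the unitary part of the Nagy--Foias--Langer decomposition. Take $x\in\clh_\infty$ and write $x=Ty$ with $y\in\clh_\infty$.
\begin{itemize}
\item Since $\|Ty\|=\|y\|$ and $T$ is a contraction, $T^*Ty=y$, so $T^*x=R^{-1}x\in\clh_\infty$.
\item Iterating, $\|T^mx\|=\|T^{*m}x\|=\|x\|$ for all $m\in\Z_+$.
\end{itemize}
Thus $\clh_\infty\subseteq\clh_u$. Purity (no unitary summand) gives $\clh_u=\{0\}$, so $\clh_\infty=\{0\}$ and $T$ is analytic. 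By \eqref{Shimorin thm}, $T$ is unitarily equivalent to an analytic shift.

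As a fallback for analyticity, I would rerun the argument of Theorem \ref{thm 2.3}. That argument only uses $\ker A\cap\ker B=\{0\}$ to show the $\cld_T$-components $g_m$ lie in $\bigcap_n B^n\cld_T$, on which $B$ is invertible because $\cld_T$ is finite-dimensional. One would then conclude using the same unitary-part argument.
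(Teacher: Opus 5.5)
Your proof is correct, and it reaches analyticity by a genuinely different route from the paper.

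The paper stays inside the $2\times 2$ model. It shows $\sigma_p(T)=\emptyset$ and reruns the component analysis of Theorem \ref{thm 2.3} to place the $\cld_T$-components $g_m$ of the preimages in $\bigcap_n B^n\cld_T$. It then proves $\sum_m\|Ag_m\|^2<\infty$. Finally, using compactness of the unit sphere of $\cld_T$ and an annihilating polynomial of $B$, it produces an eigenvector of $B$ lying in $\ker A$, i.e.\ an eigenvector of $T$.

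You instead treat $\clh_\infty=\bigcap_n T^n\clh$ as a whole. $T$ restricts there to an invertible hyponormal contraction $R$. Fredholm index theory plus purity forces $\sigma(R)\subseteq\partial\D$. The normaloid property of the hyponormal operator $R^{-1}$ then makes $R$ unitary, and purity kills $\clh_\infty$.

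One step should be justified explicitly. Being an isometry modulo compacts does not by itself give $\sigma_e(R)\subseteq\partial\D$: a unilateral shift of infinite multiplicity has essential spectrum $\overline{\D}$. What you actually use is that $R$ is invertible. Its image in the Calkin algebra is then an invertible isometry, hence unitary, and that gives $\sigma_e(R)\subseteq\partial\D$.

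The two routes trade off as follows.
\begin{itemize}
\item The paper's argument is elementary and avoids both Fredholm theory and the normaloid theorem. However, it depends essentially on $\dim\cld_T<\infty$ and is considerably more intricate.
\item Your argument is shorter and more conceptual, and it exhibits $\bigcap_n T^n\clh$ as a unitary reducing subspace (a Wold-type statement).
\item Your argument also needs only that $I-T^*T$ be compact. If you replace the block-model step by ``$T^*T=I-D_T^2$ is Fredholm of index $0$ and injective, hence invertible'', the same proof works for pure hyponormal contractions with compact defect.
\end{itemize}
Your fallback paragraph is unnecessary, and as written it is not a complete argument.
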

\begin{proof}
We first show that the point spectrum of $T$ is empty, that is, $\sigma_p(T)=\emptyset$. On the contrary, assume that $\lambda\in\sigma_p(T)$. Since $T-\lambda I$ is a hyponormal operator, it follows that
\[
\ker(T-\lambda I)\subseteq \ker(T^*-\overline{\lambda} I).
\]
Clearly, $\ker(T-\lambda I)$ is invariant under $T$. We claim that it also reduces $T$. Indeed, let $f \in \ker(T-\lambda I)$. In view of the above inclusion, it follows that
\[
T(T^*f) = \overline{\lambda} Tf = |\lambda|^2 f,
\]
which completes the proof of the claim. In particular, we have
\[
T|_{\ker(T-\lambda I)} = \lambda I_{\ker(T-\lambda I)},
\]
is a normal operator. However, by assumption, $T$ is a pure hyponormal operator. This leads to a contradiction unless $\sigma_p(T)=\emptyset$.

\noindent Following Proposition \ref{matrix rep}, we now write
\[
T=\begin{bmatrix}
S&A\\
0&B
\end{bmatrix},
\]
on $\clh=(\clh\ominus\cld_T)\oplus\cld_T$, where $S$ is a unilateral shift and $S^*A=0$. By assumption, there exists a finite-rank operator $F\in\clb(\clh)$ such that
\[
T^*T=I+F.
\]
In particular, $T^*T$ is a self-adjoint Fredholm operator with $\ker(T^*T)=\ker T = \{0\}$. We conclude that $T^*T$ is an invertible operator. Therefore, $T$ is left-invertible. Moreover, since
\[
T^*T=\begin{bmatrix}
I&0\\
0&A^*A+B^*B
\end{bmatrix},
\]
it follows that $A^*A+B^*B$ is invertible.

\noindent Next, we observe that no eigenvector of $B$ lies in the null space of $A$. Indeed, if there exist $\lambda\in\mathbb{C}$ and nonzero $h \in \clh$ such that $(B-\lambda I)h=0$ and $Ah=0$, then
\[
T\begin{bmatrix}0\\h
\end{bmatrix}=\lambda\begin{bmatrix}0\\h
\end{bmatrix},
\]
which contradicts the fact that $\sigma_p(T)=\emptyset$. We are now ready to begin the proof that $T$ is an analytic operator. Part of the proof follows the lines of the proof of Theorem \ref{thm 2.3}. Pick a vector
\[
\begin{bmatrix}
h\\g\end{bmatrix} \in \underset{n \in \Z_+}{\bigcap}T^n\clh.
\]
For each $n \in \Z_+$, there exist $\begin{bmatrix}h_n\\g_n
\end{bmatrix}\in \clh=(\clh\ominus\cld_T)\oplus\cld_T$ such that
\[
T^n\begin{bmatrix}
h_n\\g_n
\end{bmatrix}=\begin{bmatrix}
h\\g
\end{bmatrix}.
\]
As in the proof of Theorem \ref{thm 2.3} (specifically, see \eqref{eqn3} and \eqref{eqn4}), we have
\begin{equation}\label{eqn7}
h=Sh_1 \oplus Ag_1 = (S^2h_2 +SAg_2)\oplus ABg_2=\cdots,
\end{equation}
and
\begin{equation}\label{eqn8}
g=Bg_1=B^2g_2=\cdots.
\end{equation}			
We also have, as in \eqref{eqn: g_m}, that
\[
g_m\in\underset{n\geq 0}{\bigcap}B^n\cld_T,
\]
for all $m\geq 1$. As before, pick $m_0 \in \Z_+$ such that
\[
g_{m_0} \neq 0,
\]
and $g_m = 0$ for all $m < m_0$. For each $m>m_0$, we have
\[
g_{m_0}=B^{m-m_0}g_m.
\]
As $g_{m_0}\ne 0$, it follows that $g_m\ne 0$. Moreover, given that $B$ is a contraction, we have
\[
\|g_{m_0}\|=\|B^{m-m_0}g_m\|\leq \|g_m\|.
\]
We conclude that
\[
\sup \Big\{\frac{1}{\|g_m\|} : m\geq m_0\Big\} \leq \frac{1}{\|g_{m_0}\|} < \infty.
\]
We claim that
\[
Ag_n \raro 0 \text{ as }n \raro \infty.
\]
We proceed as follows: By applying $S^*,\,{S^*}^2,\ldots$ to \eqref{eqn7} (recall that $S^* A = 0$), we find
\begin{align*}
h_1=&Sh_2 \oplus Ag_2 = (S^2h_3+SAg_3) \oplus ABg_3 = \cdots
\\
h_2=&Sh_3 \oplus Ag_3 = (S^2h_4+SAg_4) \oplus ABg_4 = \cdots
\\
&\vdots
\end{align*}
By repeatedly applying the relation $S^*A = 0$ to the identities involving $h_1$, we conclude that
\[
\|h_1\|^2 = \|h_2\|^2+\|Ag_2\|^2 = \|Sh_3 \oplus Ag_3\|^2+\|Ag_2\|^2 = \|Sh_3\|^2+\|Ag_3\|^2+\|Ag_2\|^2=\ldots
\]
Continuing in this way, we arrive at
\[
\|h_1\|^2=\|h_n\|^2+\underset{m=2}{\overset{n}{\sum}}\|Ag_m\|^2\geq \underset{m=2}{\overset{n}{\sum}}\|Ag_m\|^2,
\]
for all $n \geq 2$, which implies that
\[
\underset{m=2}{\overset{\infty}{\sum}}\|Ag_m\|^2 \leq \|h_1\| ^2<\infty.
\]
As a result, $Ag_n \raro 0$, which proves the claim.

\noindent Since $\cld_T$ is a finite-dimensional subspace, $B$ is of finite rank. Therefore, the sequence $\Big\{B \Big(\frac{g_n}{\|g_n\|}\Big)\Big\}$ has a convergent subsequence, say $\Big\{B \Big(\frac{g_{n_k}}{\|g_{n_k}\|}\Big)\Big\}$. Therefore, there exists $g\in\cld_T$ such that
\begin{equation}\label{eqn: B g_nk}
B \Big(\frac{g_{n_k}}{\|g_{n_k}\|}\Big) \raro g,
\end{equation}
as $k \raro \infty$. We claim that
\[
g = 0.
\]
Assume, on the contrary, that $g \neq 0$. By \eqref{induction equation}, we know that $g_m=B^ng_{n+m}$ for all $m, n \geq 1$. In particular, we have
\[
g_{n_k-1} = Bg_{n_k},
\]
and hence
\[
\frac{g_{n_k-1}}{\|g_{n_k}\|} = B \Big(\frac{g_{n_k}}{\|g_{n_k}\|}\Big) \raro g.
\]
Therefore,
\[
\frac{1}{\|g_{n_k}\|} A\Big(g_{n_k-1}\Big) = A\Big(\frac{g_{n_k-1}}{\|g_{n_k}\|}\Big) \raro A g.
\]
On the other hand, since $Ag_n \raro 0$ as $n \raro \infty$, and $\sup \Big\{\frac{1}{\|g_m\|} : m\geq m_0\Big\} < \infty$, we conclude that
\[
Ag=0.
\]
As already pointed out, we have $g_{n_k-1}\in\underset{n\geq 0}{\cap}B^n\cld_T$. Since $\underset{n\geq 0}{\cap}B^n\cld_T$ is a closed subspace (recall that $\cld_T$ is a finite-dimensional subspace of $\clh$), by \eqref{eqn: B g_nk}, it follows that
\[
B^mg \in\underset{n\geq 0}{\cap}B^n\cld_T,
\]
for $m\geq 0$.  Also, observe that
\[
A \Big(\frac{g_{n_k-2}}{\|g_{n_k}\|}\Big) = AB \Big(\frac{g_{n_k-1}}{\|g_{n_k}\|}\Big) \raro AB g.
\]
Since $Ag_n\rightarrow 0$ and $\left\{\frac{1}{\|g_{n_k}\|}\right\}$ is bounded above, we conclude that
\[
ABg=0.
\]
Continuing in this way, we get
\[
AB^mg=0,
\]
for every $m\in \Z_+$. Since $B$ is a finite-rank operator, there exists a polynomial, say
\[
p = \prod_{i=1}^m (z - \lambda_i),
\]
for some scalars $\{\lambda_i\}_{i=1}^m$, such that
\[
p(B)=(B-\lambda_1 I_{\cld_{T}})\cdots(B-\lambda_m I_{\cld_{T}}) = 0.
\]
In particular, $p(B)g=0$. This gives either
\[
(B-\lambda_2 I_{\cld_{T}})\cdots(B-\lambda_m I_{\cld_{T}})g\ne 0,
\]
or
\[
(B-\lambda_2I_{\cld_{T}})\cdots(B-\lambda_m I_{\cld_{T}})g=0.
\]
Equivalently, either $(B-\lambda_2 I_{\cld_{T}})\cdots(B-\lambda_m I_{\cld_{T}})g$ is an eigenvector of $B$, or it is the zero vector. Now, if $(B-\lambda_2 I_{\cld_{T}})\cdots(B-\lambda_m I_{\cld_{T}})g=0$, then either $(B-\lambda_3 I_{\cld_{T}})\cdots(B-\lambda_m I_{\cld_{T}})g$ is an eigenvector of $B$, or it is zero. Continuing in this manner, we conclude that there exists a polynomial $q$ (which may be the constant polynomial $1$) such that $q(B)g$ is an eigenvector of $B$. Since $AB^mg=0$ for all $m\in \Z_+$, we have
\[
Aq(B)g=0,
\]
which implies that $q(B)g$ is an eigenvector of $B$ that also lies in the kernel of $A$. This contradicts the fact that there is no eigenvector of $B$ in $\ker A$. Thus, we conclude the proof of the claim that $g=0$. It follows that
\[
(A^*A+B^*B)\Big(\frac{g_{n_k}}{\|g_{n_k}\|}\Big) \raro 0.
\]
However, $A^*A+B^*B$ is invertible and $\{\frac{g_{n_k}}{\|g_{n_k}\|}\}$ is an orthonormal sequence. Therefore, $(A^*A+B^*B)(\frac{g_{n_k}}{\|g_{n_k}\|})$ cannot converge to zero. Hence, $T$ is analytic. By \eqref{Shimorin thm}, it follows that $T$ is unitary equivalent to an analytic shift.
\end{proof}

In Example \ref{eg: finite-isome}, we will illustrate the above result with a concrete example.

\section{$n$-finite operators}\label{sec: fin rank self com}

Let $T \in \clb(\clh)$ be a hyponormal contraction. Since
\[
[T^*, T] = D^2_{T^*} - D^2_T \leq D^2_{T^*},
\]
it follows that
\[
\text{ran} [T^*, T] \subseteq \cld_{T^*}.
\]
In other words, the range of the self commutator of $T$ is contained in the defect space $\cld_{T^*}$. Since we also know that $\cld_T \subseteq \cld_{T^*}$, it is natural to study those hyponormal contractions $T$ for which the self-commutator is of finite-rank and its range is contained in $\cld_{T^*} \ominus \cld_T$. From this perspective, the notion of $n$-finite operators emerges as a natural and significant class of operators to consider.

We first prove a result of independent interest: If $T$ is  $n$-finite, then in particular
\[
\text{dim} (\cld_{T^*} \ominus \cld_{T}) = n.
\]

\begin{proposition}\label{prop: n finite}
Let $T \in \clb(\clh)$ be a hyponormal contraction. Write $[T^*, T]$ as
\[
[T^*, T] = \sum_{j=1}^n \alpha_j e_j \otimes e_j,
\]
for some orthonormal set $\{e_j\}_{j=1}^n \subseteq \cld_{T^*} \ominus \cld_T$ and scalars $\{\alpha_j\}_{j=1}^n \subseteq (0,1]$. Then $\{e_j\}_{j=1}^n$ is an orthonormal basis for $\cld_{T^*} \ominus
 \cld_T$.
\end{proposition}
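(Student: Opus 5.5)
The plan is a direct orthogonality argument using the identity
\[
[T^*,T] = D_{T^*}^2 - D_T^2,
\]
which was already used at the start of this section. Since $\{e_j\}_{j=1}^n$ is an orthonormal set contained in $\cld_{T^*}\ominus\cld_T$, it is enough to show that it spans this space. Equivalently, I will show that the only vector $x\in\cld_{T^*}\ominus\cld_T$ with $x\perp e_j$ for all $j=1,\dots,n$ is $x=0$. Note that $\cld_{T^*}\ominus\cld_T$ makes sense because hyponormality gives $\cld_T\subseteq\cld_{T^*}$.

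Fix such an $x$. I will use the following fact about positive operators: for a positive operator $P$, the kernel of $P$ is the orthogonal complement of the closure of its range, and $\ker P^2=\ker P$.

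\emph{Evaluate $D_T^2x$.} Since $x\perp\cld_T=\overline{\text{ran}}\,D_T$, the fact above gives $x\in\ker D_T$, hence $D_T^2x=0$.

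\emph{Evaluate $[T^*,T]x$.} Since $x\perp e_j$ for every $j$, the representation $[T^*,T]=\sum_{j}\alpha_j e_j\otimes e_j$ gives $[T^*,T]x=\sum_j\alpha_j\langle x,e_j\rangle e_j=0$.

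\emph{Combine.} Plugging both facts into the identity yields $D_{T^*}^2x = [T^*,T]x + D_T^2 x = 0$. Hence
\[
\|D_{T^*}x\|^2=\langle D_{T^*}^2x,x\rangle=0,
\]
so $x\in\ker D_{T^*}=\cld_{T^*}^{\perp}$. But $x\in\cld_{T^*}$ by assumption, so $x=0$. This proves that $\{e_j\}_{j=1}^n$ spans $\cld_{T^*}\ominus\cld_T$, and therefore it is an orthonormal basis; in particular $\dim(\cld_{T^*}\ominus\cld_T)=n$.

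No step here is a serious obstacle. The only point that needs care is the identification $\ker D=(\overline{\text{ran}}\,D)^\perp$ for the positive operators $D_T$ and $D_{T^*}$, together with $\ker D^2=\ker D$. These two facts let us pass from orthogonality to the defect spaces to the vanishing of $D_T^2x$ and of $D_{T^*}x$.
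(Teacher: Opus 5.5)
Your proof is correct, but it runs dual to the paper's argument. You work on the kernel side. A vector $x\in\cld_{T^*}\ominus\cld_T$ orthogonal to every $e_j$ is annihilated by $D_T^2$ (since $x\perp\cld_T$) and by $[T^*,T]$ (since $x\perp e_j$). Hence $D_{T^*}^2x=0$, and $x\in\cld_{T^*}^\perp\cap\cld_{T^*}=\{0\}$.

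The paper works on the range side. It writes $h\in\cld_{T^*}\ominus\cld_T$ as a limit of vectors $h_m=D_{T^*}^2g_m$ and splits $h_m=u_m\oplus v_m$ along $\cld_T\oplus(\cld_{T^*}\ominus\cld_T)$. The same identity $[T^*,T]=D_{T^*}^2-D_T^2$ then shows that $u_m-D_T^2g_m$ lies in both $\cld_T$ and its complement, so it vanishes. This gives $v_m=[T^*,T]g_m\in\operatorname{span}\{e_1,\ldots,e_n\}$, and the paper passes to the limit because a finite-dimensional span is closed.

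Both proofs use the same two ingredients. The first is the identity for the self-commutator. The second is the kernel/closure-of-range duality for positive operators, which the paper uses implicitly when it asserts $\overline{\operatorname{ran}}\,D_{T^*}^2=\cld_{T^*}$.

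Your version is shorter: it avoids the approximating sequence, the orthogonal splitting and the limit. The paper's version instead exhibits every vector of $\cld_{T^*}\ominus\cld_T$ explicitly as a limit of vectors $[T^*,T]g_m$.

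Neither argument really needs finite rank or $\alpha_j\in(0,1]$. Both adapt verbatim to show that $\cld_{T^*}\ominus\cld_T=\overline{\operatorname{ran}}\,[T^*,T]$ whenever $\operatorname{ran}[T^*,T]\subseteq\cld_{T^*}\ominus\cld_T$.
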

\begin{proof}
Let $h\in \cld_{T^*}\ominus\cld_T$. There exist a sequence $\{g_m\} \subseteq \clh$ such that
\[
h = \lim\limits_{m\to \infty} D_{T^*}^2g_m.
\]
For each $m \geq 1$, define
\[
h_m = D_{T^*}^2g_m.
\]
Decompose $\cld_{T^*}$ as $\cld_{T^*}=\cld_{T}\oplus(\cld_{T^*}\ominus\cld_{T})$. Since $h_m\in \cld_{T^*}$, there exist $u_m\in \cld_{T}$ and $v_m\in \cld_{T^*}\ominus\cld_{T}$ such that
\[
h_m = u_m \oplus v_m.
\]
Write $h_m-h = u_m \oplus (v_m-h)$. As $h_m \raro h$, it follows that
\[
\lim\limits_{m\to \infty}v_m=h,
\]
and
\[
\lim\limits_{m\to \infty}u_m=0.
\]
Since
\[
[T^*, T] = D_{T^*}^2-D_T^2,
\]
it follows that
\[
D_{T^*}^2-D_T^2 = \sum\limits_{i=1}^{n}\alpha_ie_i\otimes e_i.
\]
Therefore, for each $m \geq 1$, we have
\[
D_{T^*}^2g_m-D_T^2g_m = \sum\limits_{i=1}^{n}\alpha_i\langle g_m,e_i\rangle e_i.
\]
As $D^2_{T^*} g_m = h_m$, we have
\begin{align*}
h_m-D_T^2g_m=	u_m+v_m-D_T^2g_m=\sum\limits_{i=1}^{n}\alpha_i\langle g_m,e_i\rangle e_i.
\end{align*}
Since $e_i,v_m\in \cld_{T^*}\ominus\cld_{T}$ for all $i = 1, \ldots, n$, we have
\[
u_m-D_T^2g_m\in \cld_{T^*}\ominus\cld_{T}.
\]
On the other hand, we know that $u_m-D_T^2g_m\in \cld_{T}$. Thus $u_m-D_T^2g_m=0$, and hence
\[
v_m=\sum\limits_{i=1}^{n}\alpha_i\langle g_m,e_i\rangle e_i\in \text{span}\{e_1, \ldots ,e_n\}.
\]
Since $h=\lim\limits_{m\to \infty}v_m$, it follows that
\[
h\in \text{span}\{e_1, \ldots, e_n\},
\]
and hence $\cld_{T^*}\ominus\cld_{T}=\text{span}\{e_1, \ldots, e_n\}$. This completes the proof of the result.
\end{proof}

In particular, if $T \in \clb(\clh)$ is an $n$-finite operator, then there exists an orthonormal basis $\{e_j\}_{j=1}^n $ for $\cld_{T^*} \ominus \cld_T$  and scalars $\{\alpha_j\}_{j=1}^n \subseteq (0,1]$ such that
\[
[T^*, T] = \sum_{j=1}^n \alpha_j e_j \otimes e_j.
\]
We are now ready to present the model for $n$-finite operators. Recall the notation: For $\{\alpha_i\}_{i=1}^m \subseteq (0,1]$, denote by $D_{\{\alpha_i\}_{i=1}^m} \in\clb(\mathbb{C}^m)$ the $m \times m$ diagonal matrix
\[
D_{\{\alpha_i\}_{i=1}^m} = \text{diag} \{\sqrt{1-\alpha_1}, \ldots, \sqrt{1-\alpha_m}\}.
\]
Moreover, given $n \geq m$, we define $\tilde{D}_{\{\alpha_i\}_{i=1}^m}: \mathbb{C}^m \raro \mathbb{C}^n$ by
\[
\tilde{D}_{\{\alpha_i\}_{i=1}^m} = \begin{bmatrix} D_{\{\alpha_i\}_{i=1}^m} \\0_{\mathbb{C}^{n-m}}\end{bmatrix}.
\]

\begin{theorem}\label{main result2}
Let $T \in \clb(\clh)$. Then $T$ is  $n$-finite, with
\[
[T^*, T] = \sum_{j=1}^n \alpha_j e_j \otimes e_j,
\]
for some orthonormal set $\{e_j\}_{j=1}^n \subseteq \cld_{T^*} \ominus \cld_T$ and scalars $\{\alpha_j\}_{j=1}^n \subseteq (0,1]$, if and only if
$T \cong X_1$ or $T \cong X_2$, where
\[
X_1=\begin{bmatrix}
M_z\otimes I&0\\
0&N
\end{bmatrix} \in \clb((H^2(\bd)\otimes\mathbb{C}^n)\oplus \clh_0),
\]
and
\[
X_2=\begin{bmatrix}
M_z\otimes I&P_{\mathbb{C}}\otimes \tilde{D}_{\{\alpha_i\}_{i=1}^m} & 0
\\
0&M_z^*\otimes {D}_{\{\alpha_i\}_{i=1}^m} & 0
\\
0&0&N
\end{bmatrix} \in \clb((H^2(\bd)\otimes\mathbb{C}^n)\oplus(H^2(\bd)\otimes\mathbb{C}^m)\oplus \clh_0),
\]
and $N$ is a normal operator acting on a closed subspace $\clh_0\subseteq\clh$.
\end{theorem}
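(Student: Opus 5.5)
We follow the model of Proposition \ref{matrix rep}, after splitting off the normal part. \textbf{Reduction.} A hyponormal operator decomposes as $T=T_p\oplus N$ on $\clh=\clh_p\oplus\clh_0$, with $T_p$ pure hyponormal and $N$ normal. Since $[N^*,N]=0$ and $D_N$, $D_{N^*}$ coincide, both the self-commutator and the space $\cld_{T^*}\ominus\cld_T$ live entirely in $\clh_p$. So $T$ is $n$-finite with the given data if and only if $T_p$ is. Hence it suffices to show that a pure $n$-finite operator is unitarily equivalent to the first block of $X_1$ or to the upper $2\times2$ corner of $X_2$.

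\textbf{Equations from the model.} By Proposition \ref{matrix rep}, $T_p=\begin{bmatrix} S&A\\0&B\end{bmatrix}$ on $(\clh_p\ominus\cld_T)\oplus\cld_T$, where $S$ is a unilateral shift and $S^*A=0$. We compute
\[
[T^*,T]=\begin{bmatrix} P_{\ker S^*}-AA^* & -AB^*\\ -BA^* & A^*A+B^*B-BB^*\end{bmatrix}.
\]
The range of $[T^*,T]$ lies in $\cld_{T^*}\ominus\cld_T\subseteq \clh_p\ominus\cld_T$, and $[T^*,T]$ is self-adjoint. This forces three relations:
\begin{enumerate}
\item $AB^*=0$;
\item $BB^*-B^*B=A^*A$;
\item $P_{\ker S^*}-AA^*=\sum_{j=1}^n\alpha_j e_j\otimes e_j$.
\end{enumerate}
Since $\cld_T$ is the second summand, $D_T^2=0\oplus(I-A^*A-B^*B)=0\oplus(I-BB^*)$ must have dense range in $\cld_T$.

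\textbf{Case $\cld_T=\{0\}$.} Then $T_p=S$ is an isometry and $[T^*,T]=P_{\ker S^*}$. Relation (3) then gives $\dim\ker S^*=n$ and all $\alpha_j=1$. So $T_p\cong M_z\otimes I_{\mathbb{C}^n}$, which is the $X_1$ case.

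\textbf{Case $\cld_T\neq\{0\}$.} Let $\clm=\overline{\text{ran}}\,A^*\subseteq\cld_T$ and $m=\dim\clm$. Relation (3) shows $AA^*$ differs from $P_{\ker S^*}$ by a rank-$n$ operator with eigenvalues $1-\alpha_j$. Hence on $\ker S^*\cong\mathbb{C}^n$ (whose dimension is forced by Proposition \ref{prop: n finite} together with $D_{T^*}^2$) the nonzero singular values of $A$ are $\sqrt{1-\alpha_j}$. This gives $m\le n$ and, after choosing bases, $A=\tilde D_{\{\alpha_i\}_{i=1}^m}$ viewed as a map $\clm\to\ker S^*\subseteq \clh_p\ominus\cld_T$; the terms with $\alpha_j=1$ contribute nothing, which is why $m\le n$ and the $\tilde{D}$ padding appears. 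Next, from (1) we get $\text{ran}\,B^*\perp\clm$, and from (2) the operator $B^*$ is a hyponormal contraction on $\cld_T$ with $[B,B^*]=A^*A$ supported on $\clm$.

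\textbf{Main step: $B^*\cong M_z\otimes D_{\{\alpha_i\}_{i=1}^m}$ with wandering subspace $\clm$.} Concretely, we will show that the spaces $B^{*k}\clm$ are mutually orthogonal, that $B^*$ acts on them as $D$ times an isometric copy, and that they span $\cld_T$. First, restricted to $\text{ran}\,B^*\subseteq\ker A$, relation (2) gives $BB^*=B^*B$ on the relevant vectors. An induction then gives $\|B^{*k}x\|$ and $\langle B^{*k}x,B^{*l}y\rangle$ for $x,y\in\clm$, in the spirit of the Wold decomposition. The remaining orthocomplement $\cld_T\ominus\bigvee_k B^{*k}\clm$ reduces $T_p$, and $T_p$ restricted to it is normal, since there $[B,B^*]=0$ and $A=0$. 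Purity therefore kills it.

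\textbf{Main obstacle.} The expected obstacle is showing that $B^*$ acts on $\bigvee_k B^{*k}\clm$ as exactly $M_z\otimes D$, with the \emph{same} diagonal $D$ on every level, rather than as some general weighted shift. My first attempt is to use that $D_T^2=I-BB^*$ must be compatible with $D_{T^*}^2=D_T^2+\sum\alpha_je_j\otimes e_j$ (the identity from Proposition \ref{prop: n finite}), which pins down the weights. Once this is established, $T_p$ is exactly the upper corner of $X_2$.

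\textbf{Converse.} This is a direct computation as above: $[X_2^*,X_2]$ equals $(P_{\mathbb{C}}\otimes I)-(P_{\mathbb{C}}\otimes \tilde D\tilde D^*)=P_{\mathbb{C}}\otimes\text{diag}(\alpha_j)$ on the first summand and $0$ elsewhere. We then check that $X_2$ is a contraction and that this range lies in $\cld_{X_2^*}\ominus\cld_{X_2}$, since $\cld_{X_2}$ is contained in the second summand.
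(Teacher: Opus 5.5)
Your outline follows the paper's proof except at the decisive step. You use the same block form from Proposition \ref{matrix rep}, the same three relations $AB^*=0$, $BB^*-B^*B=A^*A$, $P_{\ker S^*}-AA^*=[T^*,T]$, the same identification of $A$ with $\tilde D_{\{\alpha_i\}_{i=1}^m}$, and the same wandering-subspace picture for $B^*$ with a normal remainder. Splitting off the normal part first, instead of collecting $N=U_0\oplus C$ at the end as the paper does, is only cosmetic.

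The gap is exactly the step you call the main obstacle. You never establish how $B^*$ acts on $\bigvee_k B^{*k}\clm$, and the remedy you suggest cannot work. The identity $D_{T^*}^2=D_T^2+[T^*,T]$ holds for every operator, so comparing the two defect operators only restates relations (1)--(3) and pins down nothing new. Your assertion that $\cld_T\ominus\bigvee_kB^{*k}\clm$ reduces $T_p$ is also unproved. It needs $\bigvee_k B^{*k}\clm$ to be invariant under $B$, not just under $B^*$.

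Both points follow from a short recursion whose ingredients you already wrote down. Write $A=\sum_{i=1}^m e_i\otimes h_i$ with $\langle h_i,h_j\rangle=\delta_{ij}(1-\alpha_i)$. Then $A^*Ah_i=(1-\alpha_i)h_i$ and $\clm=\text{span}\{h_1,\dots,h_m\}$.

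Relation (1) gives $B\clm=\{0\}$, so $BB^*h_i=B^*Bh_i+A^*Ah_i=(1-\alpha_i)h_i$. For $k\ge 1$, the vector $B^{*k}h_i$ lies in $\text{ran}\,B^*\subseteq\ker A$, and on $\ker A$ relation (2) reads $BB^*=B^*B$. Induction then gives
\[
BB^{*k}h_i=(1-\alpha_i)B^{*(k-1)}h_i \qquad (k\ge 1).
\]
Consequently $\|B^{*k}h_i\|^2=(1-\alpha_i)^{k+1}$. For $p>q$ you get $\langle B^{*p}h_i,B^{*q}h_j\rangle=(1-\alpha_i)^q\langle h_i,B^{p-q}h_j\rangle=0$. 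For $p=q$ and $i\neq j$ the inner product is $(1-\alpha_i)^p\langle h_i,h_j\rangle=0$.

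So the vectors $u_{i,k}=(1-\alpha_i)^{-(k+1)/2}B^{*k}h_i$ form an orthonormal set, with
\[
B^*u_{i,k}=\sqrt{1-\alpha_i}\,u_{i,k+1},\qquad Bu_{i,k}=\sqrt{1-\alpha_i}\,u_{i,k-1},\qquad Bu_{i,0}=0.
\]
The weights are automatically constant along each orbit. This is precisely $B^*|_{\bigvee_k B^{*k}\clm}\cong M_z\otimes D_{\{\alpha_i\}_{i=1}^m}$, and the closed span reduces $B$. This recursion is the core computation of the paper's proof; with it inserted, your argument is complete.
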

\begin{proof}
Suppose $T$ is $n$-finite. From Proposition \ref{prop: n finite} there exists an orthonormal basis $\{e_j\}_{j=1}^n $ for $ \cld_{T^*} \ominus \cld_T$  and scalars $\{\alpha_j\}_{j=1}^n \subseteq (0,1]$ such that
\[
[T^*, T] = \sum_{j=1}^n \alpha_j e_j \otimes e_j.
\]
In view of Proposition \ref{matrix rep}, we decompose $T$ as
\begin{equation}\label{eqn 10}
T=\begin{bmatrix}
S&A\\
0&B
\end{bmatrix},
\end{equation}
on $\clh=(\clh\ominus\cld_T)\oplus\cld_T$, where $S \in \clb(\clh\ominus\cld_T)$ is an isometry and $S^*A=0$. From the construction of $S$ in Proposition \ref{matrix rep}, we know that $S = T|_{\clh\ominus \cld_T}$ and $\operatorname{ran} S = \clh \ominus \cld_{T^*}$. This implies that
\[
\ker S^* = (\text{ran} S)^\perp = (\clh\ominus \cld_T) \ominus (\clh \ominus \cld_{T^*}) =\cld_{T^*} \ominus  \cld_T.
\]
It follows that
\[
P_{\ker S^*} = \underset{i=1}{\overset{n}{\sum}}e_i\otimes e_i.
\]
Also, we have
\[
[T^*, T] =
\begin{bmatrix}
P_{\ker S^*}-AA^*&-AB^*
\\
-BA^*&A^*A+B^*B-BB^*
\end{bmatrix},
\]
on $\clh=(\clh\ominus\cld_T)\oplus\cld_T$. As $\{e_j\}_{j=1}^n \subseteq \cld_{T^*} \ominus \cld_T$ , it follows that
\[
[T^*, T]|_{\cld_T}=0.
\]
Therefore, we have $AB^*=0$ and
\[
A^*A+B^*B-BB^*=0,
\]
and
\[
P_{\ker S^*} - AA^* = [T^*, T].
\]
The final identity implies that
\[
AA^*=\sum\limits_{i=1}^{n}(1-\alpha_i) e_i\otimes e_i.
\]
In particular, $A:\cld_T\to \cld_{T^*}\ominus\cld_T$ is a finite-rank operator. There exist $\{h_i\}_{i=1}^n \subseteq \cld_T$ such that
\begin{equation}\label{eqn: A = 1 to n}
A=\sum\limits_{i=1}^{n}e_i\otimes h_i.
\end{equation}
This implies
\[
A A^* =\sum\limits_{i,j=1}^{n}\langle h_j,h_i\rangle e_i\otimes e_j.
\]
By comparing the two expressions for $AA^*$, we find that
\[
\sum\limits_{i=1}^{n}(1-\alpha_i) e_i\otimes e_i=\sum\limits_{i,j=1}^{n}\langle h_j,h_i\rangle e_i\otimes e_j,
\]
and hence
\begin{equation}\label{eqn:  hij delta}
\langle h_i,h_j\rangle=\delta_{ij}(1-\alpha_i),
\end{equation}
for all $i, j = 1, \ldots, n$. Note that $\alpha_i \in (0,1]$ for all $i=1, \ldots, n$.

\noindent \textsf{Case I:} Let $\alpha_i=1$ for all $i = 1, \ldots, n$. Then $A=0$. Write $S=U_0\oplus S_0$ on $\clh \ominus \cld_T = \clw_1 \oplus \clw_2$ in view of the von Neumann-Wold decomposition theorem, where $U_0$ is a unitary and $S_0$ is a shift operator. Then
\[
T=\begin{bmatrix}
U_0&0&0\\
0&S_0&0\\
0&0&B\\
\end{bmatrix},
\]
with respect to the decomposition $(\clh \ominus \cld_T) \oplus \cld_T = \clw_1\oplus \clw_2\oplus \cld_T$. It follows that
\[
[T^*, T] =\begin{bmatrix}
0&0&0\\
0&P_{\ker S^*_0}&0\\
0&0&B^*B-BB^*
\end{bmatrix}.
\]
Again, since
\[
[T^*, T]|_{\cld_T}=0,
\]
we have $[B^*,B] = 0$, that is, $B$ is a normal operator. Set $\clh_0= \clw_1\oplus\cld_T$, and define $U:(H^2(\mathbb{D})\otimes \mathbb{C}^n)\oplus \clh_0\to \clh = \clw_2\oplus\clh_0$ by
\[
U(z^mf_i+y)=S_0^me_i+ y,
\]
for all $m \in \Z_+$ and $i=1, \ldots, n$, where $\{f_i:i=1, \ldots, n\}$ is the standard orthonormal basis for $\mathbb{C}^n$. Since $\{S_0^me_i:m\in \Z_+, i=1, \ldots, n\}$ forms an orthonormal basis for $\clw_2$, it follows that $U$ is a unitary operator and
\[
U\begin{bmatrix}
M_z\otimes I&0\\
0&N
\end{bmatrix}=TU,
\]
where $N=U_0\oplus B$ is the normal operator on $\clh_0$.
	
\noindent \textsf{Case II:} Suppose there exist $i \in \{1, \ldots, n\}$ such that $\alpha_i<1$ (recall that $\alpha_j \neq 0$ for all $j=1, \ldots, n$). Without loss of generality, assume that
\[
\alpha_1\leq\alpha_2\leq\cdots\leq\alpha_n,
\]
and suppose
\[
m :=\max\{j:\alpha_j<1\}.
\]
Note that $1\leq m\leq n$. Since \eqref{eqn:  hij delta} implies $\langle h_i,h_j\rangle=\delta_{ij}(1-\alpha_i)$ for all $i, j = 1, \ldots, n$, it follows that $h_j=0$ for all $j>m$. Hence, the representation of $A$ in \eqref{eqn: A = 1 to n} simplifies to
\begin{equation*}
A=\sum_{i=1}^{m}e_i\otimes h_i.
\end{equation*}
Therefore,
\[
AB^*=\sum\limits_{i=1}^{m}e_i\otimes Bh_i.
\]
At this point, we use the information that $AB^* = 0$ and $A^*A + [B^*, B] = 0$. First, we have
\[
Bh_j=0,
\]
for all $j = 1, \ldots, m$, and on the other hand,
\[
[B, B^*] = A^*A = (\sum\limits_{j=1}^{m}h_j\otimes e_j)(\sum\limits_{j=1}^{m}e_j\otimes h_j)=\sum\limits_{j=1}^{m}h_j\otimes h_j.
\]
We have
\begin{align}\label{hypB}
BB^* - B^* B = \sum\limits_{j=1}^{m} h_j \otimes h_j.
\end{align}
Fix $i \in \{1, \ldots, m\}$. Since $\{h_1, \ldots, h_m\}\subseteq \ker B$, it follows that $B^* B h_i = 0$, and hence
\[
B B^* h_i = (\sum\limits_{j=1}^{m} h_j \otimes h_j)h_i.
\]
Then, \eqref{eqn:  hij delta} yields
\[
BB^*h_i = (1-\alpha_i) h_i.
\]
Moreover, for each $n \geq 2$, the identity \eqref{hypB} implies
\[
\begin{split}
BB^{*n} h_j - B^* B B^{*(n-1)}h_j = \sum\limits_{j=1}^{m} \langle B^{*(n-1)} h_j, h_j \rangle h_j = \sum\limits_{j=1}^{m} \langle h_j, B^{n-1} h_j \rangle h_j = 0,
\end{split}
\]
that is,
\[
BB^{*n} h_i = B^* (B B^{*(n-1)}h_i),
\]
for all $n \geq 2$. In particular,
\[
B B^{*2} h_i = B^* (B B^* h_i) = (1 - \alpha_i) B^* h_i,
\]
and, continuing in this way, it follows by induction that, in general,
\begin{align}\label{eqn: BB*}
BB^{*n}h_i=(1-\alpha_i)B^{*^{}n-1}h_i,
\end{align}
for all $n \geq 1$. We claim that $\clb_i$ is an orthonormal set, where
\[
\clb_i= \Big\{\frac{1}{\|h_i\|^{m+1}}B^{*m}h_i: m \in \Z_+\Big\}.
\]
For each $p>q \geq 0$, by repeatedly applying \eqref{eqn: BB*}, we find that
\begin{align*}
\langle B^{*q}h_i,B^{*p}h_i\rangle&=\langle h_i,B^q B^{*p}h_i\rangle\\
&=\langle h_i,B^{q-1}(1-\alpha_i)B^{*^{p-1}}h_i\rangle
\\
&\vdots
\\
&=\langle h_i,(1-\alpha_i)^q B^{*^{p-q}}h_i\rangle
\\
&=\langle B^{p-q}h_i,(1-\alpha_i)^q h_i\rangle
\\
&=0.
\end{align*}
On the other hand, for each $p \geq 1$, we have
\begin{align*}
\|B^{*p}h_i\|^2 & = \langle B^{*p}h_i,B^{*p}h_i\rangle
\\
&=\langle h_i,B^p B^{*p}h_i\rangle
\\
&=\langle h_i,(1-\alpha_i)^p h_i\rangle
\\
&=(1-\alpha_i)^p\|h_i\|^2
\\
&=(\langle h_i,h_i\rangle)^p\|h_i\|^2
\\
&=\|h_i\|^{2(p+1)}.
\end{align*}
In the above, we have used the fact that $\langle h_s,h_t\rangle=\delta_{st}(1-\alpha_s)$ for all $s, t = 1, \ldots,n$ (see \eqref{eqn:  hij delta}). This implies that
\[
\|B^{*p}h_i\|=\|h_i\|^{p+1},
\]
for every $p \geq 1$, thus completing the proof of the claim. Define
\[
\clm_i=\overline{\text{span}}\clb_i.
\]
By construction (or see \eqref{eqn: BB*}), $\clm_i$ reduces $B$. Moreover, for each $n \geq 1$, we compute:
\[
\begin{split}
B|_{\clm_i} \Big(\frac{1}{\|h_i\|^{n+1}} B^{*n} h_i\Big) = \frac{(1 - \alpha_i)}{\|h_i\|^{n+1}} B^{*{(n-1)}} h_i = \frac{(1 - \alpha_i)}{\|h_i\|} \Big(\frac{1}{\|h_i\|^{n}} B^{*(n-1)} h_i\Big).
\end{split}
\]
Since $\|h_i\| = \sqrt{1 - \alpha_i}$, it follows that
\[
B|_{\clm_i} \Big(\frac{1}{\|h_i\|^{n+1}} B^{*n} h_i\Big) = \sqrt{1 - \alpha_i} \Big(\frac{1}{\|h_i\|^{n}} B^{*(n-1)} h_i\Big),
\]
and hence, $B|_{\clm_i}$ is $\sqrt{1 - \alpha_i}$ times $S_i^*$, where $S_i$ is the unilateral shift of multiplicity one with respect to the orthonormal basis $\clb_i$ for $1\leq i\leq m$. In other words, we have
\[
B|_{\clm_i} \cong \sqrt{1 - \alpha_i}S_i^*.
\]
Define
\[
\clm=\oplus_{i=1}^{m} \clm_i.
\]
Then $\clm$ reduces $B$ and
\[
B|_\clm \cong (1-\alpha_1)^{1/2}S_1^* \oplus\cdots\oplus(1-\alpha_m)^{1/2}S_m^*.
\]
Recall that $A: \cld_T \raro \cld_{T^*} \ominus \cld_T$ is given by $A = \sum_{i=1}^{m} e_i \otimes h_i$. Since $h_i \in \clm$ for all $i=1, \ldots, n$, it follows that
\[
A|_{\cld_T\ominus\clm}=0.
\]
Recall moreover that $T = \begin{bmatrix} S & A \\ 0 & B \end{bmatrix}$ on $(\clh \ominus \cld_T) \oplus \cld_T$. Write
\[
B = \begin{bmatrix} B|_{\clm} & 0 \\ 0 & C \end{bmatrix},
\]
on $\clm \oplus (\cld_T \ominus \clm)$. As $A|_{\cld_T\ominus\clm}=0$, it follows that
\[
T=\begin{bmatrix}
U_0&0&0&0\\
0&S_0&A&0\\
0&0&B|_\clm&0\\
0&0&0&C
\end{bmatrix},
\]
where $S= \begin{bmatrix} U_0 & 0 \\ 0 & S_0 \end{bmatrix}$ on $\clh \ominus \cld_T = \clw_1 \oplus \clw_2$ is the von Neumann-Wold decomposition, with $U_0 \in \clb(\clw_1)$ being a unitary and $S_0 \in \clb(\clw_2)$ a unilateral shift operator. We have
\[
[T^*, T] = \begin{bmatrix}
0&0&0&0\\
0&P_{\ker S^*_0}-AA^*&0&0\\
0&0&0&0\\
0&0&0&C^*C-CC^*
\end{bmatrix},
\]
on $\clw_1\oplus \clw_2\oplus \clm\oplus(\cld_T\ominus\clm)$. Since $[T^*, T]|_{\cld_T}=0$, it follows that $C$ is a normal operator. We also know that $AA^*=\sum\limits_{i=1}^{n}(1-\alpha_i) e_i\otimes e_i.$ (see the identity preceding \eqref{eqn: A = 1 to n}). Since $\alpha_i=1 $ for $m+1\leq i\leq n$, and $P_{\ker S^*_0} = AA^*+ [T^*, T] $, we have
\[
P_{\ker S^*_0} = \sum\limits_{i=1}^{m}(1-\alpha_i)e_i\otimes e_i+\sum\limits_{i=1}^{n}\alpha_ie_i\otimes e_i = \sum\limits_{i=1}^{m}e_i\otimes e_i+\sum\limits_{i=m+1}^{n}\alpha_ie_i\otimes e_i,
\]
and hence
\[
P_{\ker S^*_0} = \sum\limits_{i=1}^{n}e_i\otimes e_i.
\]
Suppose $\{f_i: i = 1, \ldots, n\}$ and $\{g_j: j = 1, \ldots, m\}$ are the standard orthonormal bases for $\mathbb{C}^n$ and $\mathbb{C}^m$, respectively. Set
\[
\clh_0=W_1\oplus(\cld_T\ominus\clm),
\]
and define $U:(H^2(\mathbb{D})\otimes \mathbb{C}^n)\oplus(H^2(\mathbb{D})\otimes \mathbb{C}^m)\oplus \clh_0\to \clh$ by
\[
U(z^sf_i+ z^rg_j+y)=S_0^se_i+ \frac{1}{(1-\alpha_j)^{\frac{r+1}{2}}}B^{*r}h_j + y,
\]
for all $s,r \in \Z_+$, $i = 1, \ldots, n$, and $j = 1, \ldots, m$. Since $\{S_0^s e_i:s\in \Z_+, i = 1, \ldots, n\}$ forms an orthonormal basis for $\clw_2$, and
\[
\left\{\frac{1}{(1-\alpha_j)^{\frac{r+1}{2}}} B^{*r}h_j :r \in \Z_+, j = 1, \ldots, m\right\},
\]
forms an orthonormal basis for $\clm$, we conclude that $U$ is a unitary. We claim that
\[
U^*TU=\begin{bmatrix}
M_z\otimes I & P_{\mathbb{C}} \otimes \tilde{D}_{\{\alpha_i\}_{i=1}^m} & 0\\
0& M_z^* \otimes {D}_{\{\alpha_i\}_{i=1}^m}&0\\
0&0&N
\end{bmatrix},
\]
where $N$ is the normal operator $U_0\oplus C$. To prove our claim, first we observe that $AB^*=0$. Moreover,
\begin{align*}
TU(z^sf_i+ z^rg_j+y)= \begin{cases}
S_0^{s+1}e_i+ \frac{\sqrt{1-\alpha_j}}{(1-\alpha_j)^{\frac{r}{2}}} B^{*(r-1)}h_j + Ny &\text{ if }r\ge1\\
S_0^{s+1}e_i+\sqrt{1-\alpha_j}e_j+Ny&\text{ if }r=0.
\end{cases}
\end{align*}
For each $r\geq 1$, we compute
\begin{align*}
U X_2 (z^sf_i+ z^rg_j+y) & = U\begin{bmatrix}
M_z\otimes I & P_{\mathbb{C}} \otimes \tilde{D}_{\{\alpha_i\}_{i=1}^m} &0\\ 0 & M_z^* \otimes {D}_{\{\alpha_i\}_{i=1}^m} & 0\\ 0 & 0 & N \end{bmatrix}
\begin{bmatrix}
z^sf_i\\z^rg_j\\ y\end{bmatrix}
\\
& = U(z^{s+1}f_i+\sqrt{1-\alpha_j}z^{r-1}g_j+Ny)
\\
& = S_0^{s+1}e_i + \frac{\sqrt{1-\alpha_j}}{(1-\alpha_j)^{\frac{r}{2}}} B^{*(r-1)}h_j +Ny.
\end{align*}
In the case of $r=0$, we have
\begin{align*}
UX_2(z^sf_i+ g_j+y) & = U\begin{bmatrix}
M_z\otimes I & P_{\mathbb{C}} \otimes \tilde{D}_{\{\alpha_i\}_{i=1}^m} & 0\\0& M_z^* \otimes {D}_{\{\alpha_i\}_{i=1}^m} & 0\\0&0&N
\end{bmatrix}
\begin{bmatrix}
z^sf_i\\
g_j\\
y
\end{bmatrix}
\\
& = U(z^{s+1}f_i+\sqrt{1-\alpha_j}f_j+0+Ny)
\\
& = S_0^{s+1}e_i+\sqrt{1-\alpha_j}e_j+Ny.
\end{align*}
Therefore, we conclude that $U^*TU=X_2$.
	
\noindent Conversely, assume that there exist a unitary $U$ such that
\[
U^*TU = \begin{bmatrix}
M_z\otimes I & P_{\mathbb{C}} \otimes \tilde{D}_{\{\alpha_i\}_{i=1}^m} & 0\\0& M_z^* \otimes {D}_{\{\alpha_i\}_{i=1}^m} & 0\\0&0&N
\end{bmatrix},\text{ or }U^*TU=\begin{bmatrix}
M_z\otimes I&0\\
0&N
\end{bmatrix}
\]
where $N$ a normal operator. A routine computation shows that $[T^*,T]=\sum\limits_{i=1}^{n}\alpha_i e_i\otimes e_i$, where $\{e_i:1\leq i\leq n\}\subseteq \cld_{T^*}\ominus\cld_T$ is an orthonormal set and $\alpha_i\in (0,1]$.
\end{proof}

\section{Examples}\label{sec: eg}

The aim of this section is to present several examples that illustrate the notions introduced and the results established in this paper. We begin with an example of an operator that satisfies the conditions of Theorem \ref{thm 2.3}.

\begin{example}\label{eg 2.4}
Let $S\in \clb(\clh)$ be a unilateral shift of infinite multiplicity: 
\[
\text{dim} (\ker S^*) = \infty.
\]
Fix a natural number $n$. Let $\clm$ be a finite-dimensional subspace of $\clh$ with an orthonormal basis $\{x_1, \ldots,x_n\}$. Define $S_1:\clm\to \clm$ by
\[
S_1x_i=\begin{cases}
0&\text{ if }i=1\\
x_{i-1}&\text{ if } i =2, \ldots, n.
\end{cases}
\]
Fix $\lambda \in \D$. Define $B\in\clb(\clh)$ by
\[
B|_\clm=\lambda S_1,
\]
and
\[
B|_{\clh\ominus\clm}=0.
\]
In other words,
\[
B=\begin{bmatrix}
\lambda S_1 & 0 \\
0& 0
\end{bmatrix},
\]
on $\clm \oplus \clm^\perp$. Clearly, $B$ is a nilpotent operator, and, in particular, $B$ is analytic. Let $A\in\clb(\clh)$ be a partial isometry with initial space $(\clh\ominus\clm) \oplus \text{span}\{x_n\}$ and final space contained in $\ker S^*$. Therefore,
\[
A^*A=P_{(\clh\ominus\clm)\oplus\,\text{span}\{x_n\}}.
\]	
Also, we have $\text{ran} A\subseteq \ker S^*$, or equivalently,
\[
S^* A = 0.
\]
Finally, define $T \in \clb(\clh\oplus\clh)$ by
\[
T=\begin{bmatrix}
S&A\\
0&B
\end{bmatrix}.
\]
It follows that
\[
TT^*=\begin{bmatrix}
P_{\text{ran} S}+P_{\text{ran} A}&0\\
0& |\lambda|^2P_{span\{x_2,\ldots,x_n\}}
\end{bmatrix},
\]
and
\[
T^*T=\begin{bmatrix}
I&0\\
0& A^*A + B^* B
\end{bmatrix}.
\]
Moreover, as
\[
A^*A + B^* B = P_{(\clh\ominus\clm)\oplus\langle \{x_n\}\rangle}+|\lambda|^2P_{span\{x_1,\ldots,x_{n-1}\}},
\]
we conclude that $A^*A + B^* B$ is an invertible operator. Clearly, $T$ is a hyponormal contraction. By Theorem \ref{thm 2.3}, it follows that $T$ is unitarily equivalent to a shift.
\end{example}

Recall from \eqref{Shimorin thm} that analytic and left-invertible operators are precisely analytic shifts. In the above, we have exhibited certain examples of upper triangular block operator matrices that are unitarily equivalent to analytic shifts. However, an upper triangular block operator with a unilateral shift on the north-west entry can also produce an example of a non-analytic but left-invertible operator:

\begin{example}
We follow the construction of Proposition \ref{proposition 1}. Let $S \in \clb(\clh)$ be a unilateral shift of finite multiplicity and let $X \in \clb(\clh)$. Suppose
\[
(\ker S^*)^\perp = \ker X.
\]
Then
\[
T:=\begin{bmatrix}
S&X\\
0&S^*
\end{bmatrix},
\]
is not analytic but left-invertible. To see this, for each $n \geq 1$, we compute
\[
T^n=\begin{bmatrix}
S^n&S^{n-1}X+XS^{*(n-1)}
\\
0&S^{*n}
\end{bmatrix}.
\]
Fix a nonzero $g\in \clh$ and set $x=Xg$. We claim that
\[
\begin{bmatrix}
x\\S^*g
\end{bmatrix} \in \bigcap_{n=0}^\infty T^n (\clh \oplus \clh).
\]
Fix a natural number $n$. Since $S^*\clh=\clh$, there exits
\[
g_n\in (\ker S^*)^\perp = (\text{ran} X^*)^\perp=\ker X,
\]
such that $g=S^{*(n-1)}g_n$. Then $Xg_n=0$, and $T^n\begin{bmatrix} 0\\g_n \end{bmatrix}=\begin{bmatrix}x\\S^*g \end{bmatrix}$. This proves that $T$ is not analytic.
\end{example}

We also have example related to Toeplitz operators:

\begin{example}
Pick inner functions $\theta, \varphi \in H^\infty(\D)$ and define the model space $\clq_\theta$ by
\[
\clq_\theta = H^2(\D) \ominus \theta H^2(\D).
\]
Assume that (for instance, if $\theta$ is an infinite Blaschke product)
\[
\text{dim} \clq_\theta = \infty.
\]
Define $A=T_{z\theta}$ and $B=T_\varphi$ on $H^2(\D)$. Clearly,
\[
A^*A+B^*B=2I,
\]
is invertible. Note that $\{\theta z^n\}_{n \in \Z_+}$ is an orthonormal basis for $\theta H^2(\D)$. Let
\[
\{e_{i,j}:i \in \mathbb{N}, j \in \Z_+\},
\]
be an orthonormal basis for $\clq_\theta$. Define $S\in\clb(H^2(\D))$ by
\[
S(\theta z^m)=e_{m+1,0},
\]
for all $m \in \Z_+$, and
\[
S(e_{n,m})=e_{n,m+1},
\]
for $n\ge1$ and $m \in \Z_+$. Clearly, $S$ is a unilateral shift on $H^2(\D)$ with
\[
\ker S^* = \theta H^2(\D).
\]
Moreover,
\[
\text{ran} A = z\theta H^2(\D)\subseteq\theta H^2(\D) = \ker S^*.
\]
By Theorem \ref{thm 2.3}, we conclude that $T=\begin{bmatrix} S&A\\0&B \end{bmatrix}$ is unitarily equivalent to an analytic shift.
\end{example}

We now present an example of a finite-isometry, which also serves to illustrate Theorem \ref{main thm}.

\begin{example}\label{eg: finite-isome}
Define linear operators $A:\mathbb{C}^2\rightarrow H^2(\bd)$ and $B:\mathbb{C}^2\rightarrow\mathbb{C}^2$ by
\[
A(x_1,x_2)=x_1,
\]
and
\[
B(x_1,x_2)=\left(\frac{x_2}{2},0\right),
\]
respectively, for all $(x_1,x_2) \in \mathbb{C}^2$. Define $T$ on $H^2(\bd)\oplus H^2(\bd)\oplus\mathbb{C}^2$ by
\begin{align*}
T = \begin{bmatrix}
M_z&0&0\\
0&M_z&A\\
0&0&B
\end{bmatrix}.
\end{align*}
Then
\[
T^*T-TT^* = \begin{bmatrix}
P_{\mathbb{C}}&0&0\\
0&0&0\\
0&0&C
\end{bmatrix},
\]
where $C\in\clb(\mathbb{C}^2)$ is defined by
\[
C(x_1,x_2)=\frac{1}{4}(3x_1,x_2),
\]
for all $(x_1,x_2) \in \mathbb{C}^2$. It is easy to see that $T$ is a finite isometry. We claim that $T$ is pure. Suppose, on the contrary, that there exists a reducing subspace $\clm\subseteq H^2(\bd)\oplus H^2(\bd)\oplus\mathbb{C}^2$ for $T$ such that $T|_{M}$ is a normal operator. Then
\[
\clm\subseteq \text{ker}(T^*T-TT^*)=zH^2(\bd)\oplus H^2(\bd)\oplus\{0\}.
\]
Let $zf+g+0\in\clm$ for $f,g\in H^2(\bd)$. Then
\[
T^*(zf+g+0)\in\clm.
\]
which implies that
\[
f+M_z^*g+A^*g\in zH^2(\bd)\oplus H^2(\bd)\oplus\{0\}.
\]
We have $A^*g=0$, hence
\[
\langle g,1\rangle=\langle g,A(1,0)\rangle=\langle A^*g,(1,0)\rangle=0.
\]
As a result, we conclude that $f,g\in zH^2(\bd)$. Repeating the same steps with ${T^*}^n$ for $n>1$, we obtain $\{zf, f, M_z^*f, \ldots\} \subseteq zH^2(\bd)$ and $\{g, M_z^*g, {M_z^*}^2g, \ldots\} \subseteq zH^2(\bd)$. This is only possible if
\[
f = g=0.
\]
Thus, $\clm=\{0\}$, and as a result, $T$ is a pure hyponormal operator. By Theorem \ref{main thm}, it follows that $T$ is unitarily equivalent to an analytic shift.
\end{example}

We conclude the paper with a simple observation regarding operators whose self-commutator is an orthogonal projection. Let $T\in\clb(\clh)$ be a contraction such that $[T^*,T]$ a finite-rank orthogonal projection. There exist orthonormal vectors $\{e_i\}_{i=1}^n \subseteq \clh$ such that
\[
T^*T-TT^*=\sum_{i=1}^n e_i\otimes e_i.
\]
This implies
\[
TT^*+\sum_{i=1}^n e_i\otimes e_i=T^*T.
\]
Since $T$ is a contraction, it follows that $T^*T\leq I$, and hence
\[
TT^*+\sum_{i=1}^n e_i\otimes e_i \leq I.
\]
As a result, $\|T^*e_i\|^2+1\leq 1$, and hence $T^*e_i=0$ for all $i=1, \ldots, n$. This implies $(I-T^*T)e_i=0$, and $D_{T^*}^2e_i = e_i$ for $i= 1, \ldots, n$. We conclude that $\{e_i:1\leq i\leq n\}\subseteq \cld_{T^*}\ominus\cld_T$. Theorem \ref{main result2} now implies that $T$ is unitarily equivalent to $S\oplus A$, where $S$ is a unilateral shift and $A$ is a normal operator.

\bigskip

\noindent\textbf{Acknowledgement:} The research of the first named
author is supported by NBHM (National Board of Higher Mathematics, India) Ph.D.
fellowship No. 0203/13(47)/2021-R\&D-II/13177. The research of the second named author is supported by the FRS grant, IIT (ISM) Dhanbad (File no. FRS(218)/2024-2025/M\&C). The research of the third named author is supported in part by ANRF, Department of Science \& Technology (DST), Government of India (File No: ANRF/ARGM/2025/000130/MTR).

\end{document}